\newtheorem{theorem}{Theorem}[section]
\newtheorem{proposition}[theorem]{Proposition}
\newtheorem{lemma}[theorem]{Lemma}
\theoremstyle{definition}
\theoremstyle{remark}
\numberwithin{equation}{section}
\newcommand{\primos}{{\mathfrak {p}}}
\newcommand{\N}{\mathbb{N}}
\begin{document}

\title[Algebras and spaces of Dirichlet series with maximal Bohr's strip]{Algebras and Banach spaces of Dirichlet series with maximal Bohr's strip}

\author{Thiago R. Alves}

\address{Departamento de Matem\'{a}tica,
	Instituto de Ci\^{e}ncias Exatas,
	Universidade Federal do Amazonas,
	69.077-000 -- Manaus -- Brazil}

\email{alves@ufam.edu.br}

\thanks{The first author was supported in part by Coordenação de Aperfeiçoamento de Pessoal de Nível Superior - Brasil (CAPES) - Finance Code 001 and FAPEAM}

\author{Leonardo Brito}

\address{Departamento de Matem\'{a}tica,
	Instituto de Ci\^{e}ncias Exatas,
	Universidade Federal do Amazonas,
	69.077-000 -- Manaus -- Brazil}

\email{leocareiro2018@gmail.com}

\thanks{The second author was supported by FAPEAM}

\author{Daniel Carando}

\address{Departamento de Matem\'atica,
	Facultad de Cs. Exactas y Naturales,
	Universidad de Buenos Aires
	and IMAS-UBA-CONICET, Argentina.}

\email{dcarando@dm.uba.ar}

\thanks{The third author was supported by CONICET-PIP 11220130100329CO and  ANPCyT PICT 2018-04104.}

\subjclass[2020]{Primary 30B50, 46B87; Secondary 46E25, 30H50}

\date{}

\dedicatory{}

\keywords{Dirichlet series. Lineability. Algebrability. Spaceability. Bohr's strips.}

\begin{abstract}
	We study linear and algebraic structures in sets of Dirichlet series with maximal Bohr's strip. More precisely, we consider a set $\mathscr M$ of Dirichlet series which are uniformly continuous on the right half plane and whose strip of uniform but  not absolute convergence has maximal width, i.e.,  $\nicefrac{1}{2}$.  Considering the uniform norm, we show that $\mathscr M$ contains an isometric copy of $\ell_1$ (except zero) and is strongly $\aleph_0$-algebrable. Also, there is a dense $G_\delta$ set such that any of its elements generates a free algebra contained in $\mathscr M\cup \{0\}$. Furthermore, we investigate $\mathscr{M}$ as a subset of the Hilbert space of Dirichlet series whose coefficients are square-summable. In  this case, we prove that $\mathscr M$ contains an isometric copy of $\ell_2$ (except zero).
\end{abstract}

\maketitle

\section{Introduction and main results}

Mathematics is plenty of examples that seem to challenge the intuition. For instance, discontinuous additive functions,  Weierstrass' Monsters, Peano curves, non-extendable holomorphic functions, and so on and so forth. The counter-intuitiveness of these examples may lead us to believe they must be rare, but usually this is not the case. Moreover, recent investigations are presenting a very interesting picture. Many of these peculiar examples/objects are not only far away from being rare: in many situations, the set formed by these objects can even contain big linear or algebraic structures.  As a seminal example, Gurarij in \cite{Gu91} constructed infinite dimensional subspaces of $C([0,1])$  all whose nonzero elements are nowhere differentiable functions. Since then, a whole theory was built in this direction, especially in the last years. Many of these advances are documented in the recent monograph \cite{AronBerPelSeo} (see also \cite{AronMaestro, BerPelSeo}).

In this work we find different structures in the set of Dirichlet with maximal Bohr's strips (see below for the definition). Previous results on similar lines can be found in \cite{AlbJuaPab} for polynomials with bad convergence properties, in \cite{AlvCar} for  holomorphic functions with wild behaviour in certain points or in \cite{Bayart05-michigan,Bayart05-studia,BayaQuar} for different functions including Dirichlet series.
We refer the reader to \cite{DefGarMaeSev19} and \cite{Queffelec^2} for a most comprehensive background on functional analytic aspects of Dirichlet series and infinite dimensional holomorphy.

A {\it Dirichlet series} is a series of the form $\sum_{n=1}^\infty a_n n^{-s}$, where the coefficients $a_n$ are complex numbers and $s$ is a complex variable.
The natural domains of convergence of Dirichlet series are half-planes. Given a  Dirichlet series $D= \sum a_{n} n^{-s}$ we can consider three natural abscissas which define the biggest half-planes on which $D$ converges, converges uniformly and converges absolutely:
\begin{equation*} 
	\sigma_{c}(D) \leq \sigma_{u} (D) \leq \sigma_{a} (D) \, .
\end{equation*}
It is not hard to see that
\begin{equation*}
	\sup_{D \text{ Dir. ser.}} \sigma_{a} (D) - \sigma_{c} (D)  = 1.
\end{equation*}
Harald Bohr was among the first to consider the problem of finding the maximal width of the strip on which a Dirichlet series can converge uniformly but not absolutely (this strip is usually called \emph{Bohr's strip}). Thus, the so called \textit{Bohr's absolute convergence problem} \cite{Bo13,Bo13_Goett} was to determine the number
\[
S : = \sup_{D \text{ Dir. ser.}} \sigma_{a} (D) - \sigma_{u} (D) \,.
\]
Bohr \cite{Bo13_Goett} first showed in 1913 that $S \leq 1/2$\,,
and later in 1931 Bohnenblust and Hille \cite{BoHi31} proved that actually
\begin{equation} \label{BoBoHi}
	S = 1/2\,.
\end{equation}

In the sequel, we write $\mathbb{C}_0$ for the open right half plane and, more generally, for $a \in \mathbb R$  we set
$$\mathbb C_a=\{z\in \mathbb C\colon \text{Re}\, z > a\}.$$
Bohr's fundamental theorem (see \cite{Bo13} or \cite[Theorem 1.13]{DefGarMaeSev19}), ensures that every bounded holomorphic function $f : \mathbb{C}_0 \to \mathbb{C}$ which may be represented as a Dirichlet series in some half-plane converges uniformly on $\mathbb{C}_\delta$ for each $\delta > 0$. Let $\mathscr{H}_\infty$ denote the space of all such functions. It is well-known that $\mathscr{H}_\infty$ is actually a Banach algebra when equipped with the supremum norm. As a consequence of Bohr's results, the absolute convergence problem and its solution by Bohnenblust and Hille  can be written as
\begin{equation}\label{Svalue}
	S  = \sup_{D \in \mathscr{H}_\infty} \sigma_{a} (D) = \frac 1 2 \,.
\end{equation}
As expected, finding explicit Dirichlet series $D\in \mathscr{H}_\infty$ such that $\sigma_{a} (D) = \frac 1 2 $ is not an easy task. However, we will show that we have plenty of them and that the set of such series contains large linear and algebraic structures. Moreover, we can get all this in a much smaller subalgebra of  $\mathscr{H}_\infty$ which we now define.

A \textit{Dirichlet polynomial} is a Dirichlet series of the form $\sum_{n=1}^N a_n n^{-s}$. Let $\mathscr{A}(\mathbb{C}_0)$ denote the subalgebra of $\mathscr{H}_\infty$ of all functions $f$ which are uniform limits on $\mathbb{C}_0$ of a sequence of Dirichlet polynomials. It follows from \cite[Theorem 2.3]{AroBayGauMaeNes} that $f \in \mathscr{A}(\mathbb{C}_0)$ if and only if $f$ is represented by a Dirichlet series pointwise on $\mathbb{C}_0$ and $f$ is uniformly continuous on $\mathbb{C}_0$. This implies that each function $f \in \mathscr{A}(\mathbb{C}_0)$ extends uniformly to a uniformly continuous function on $\overline{\mathbb{C}_0}$.

Our goal is to study the following set of Dirichlet series:
$$\mathscr{M} := \big\{D \in \mathscr{A}(\mathbb{C}_0) : \sigma_a(D) = \displaystyle \frac{1}{2}\big\}.$$
Note that, by \eqref{Svalue}, the Dirichlet series belonging to $\mathscr{M}$ are those in $\mathscr{A}(\mathbb{C}_0)$ whose strip of uniform but not absolute convergence has maximal width.

{ Let $E$ be a topological vector space and $\kappa$ be a cardinal number. A subset $Z \subset E$ is said to be {\it $\kappa$-spaceable} if $Z \cup\{0\}$ contains a closed vector subspace of $E$ with dimension $\kappa$. Moreover, we say that a subset $Z \subset E$ is {\it maximal spaceable} if $Z$ is  $dim (E)$-spaceable.}

Our first main theorem regarding the set $\mathscr{M}$ is the following.
{
\begin{theorem} \label{thm-radius-spac}

The set $\mathscr{M}$ is maximal spaceable in  $\mathscr{A}(\mathbb{C}_0)$. More precisely, there is an isometric copy of $\ell_1$ in $\mathscr{A}(\mathbb{C}_0)$ which is contained in $\mathscr{M} \cup \{0\}$.
\end{theorem}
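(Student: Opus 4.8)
The plan is to build the isometric copy of $\ell_1$ from a carefully chosen sequence of Dirichlet series in $\mathscr{M}$ with mutually ``disjoint'' supports in a suitable sense, so that their linear combinations inherit both the isometry property and the maximality of the abscissa of absolute convergence. First I would fix a single Dirichlet series $D_0 \in \mathscr{M}$; its existence follows from \eqref{Svalue}, but for the construction it is convenient to have an explicit example with unit norm and $\sigma_a(D_0)=1/2$, built from the Bohnenblust--Hille type homogeneous Dirichlet polynomials supported on products of distinct primes. The key idea is then to produce infinitely many translates/dilates of such a building block that live on multiplicatively independent blocks of primes: partition the primes into infinitely many infinite sets $\{P_k\}_{k\in\N}$, and for each $k$ let $D_k$ be a copy of the extremal series supported only on integers whose prime factors lie in $P_k$. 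Since the $D_k$ have pairwise disjoint sets of supporting integers (apart from the constant term, which one arranges to vanish), for any finitely supported scalar sequence $(c_k)$ the series $D=\sum_k c_k D_k$ has coefficients that are, on each block, just scalar multiples of the coefficients of a single $D_k$.

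The main work is to verify the two properties for $D=\sum_k c_k D_k$. For the isometry, I would show $\|\sum_k c_k D_k\|_\infty = \sum_k |c_k|$. The inequality ``$\le$'' is the triangle inequality together with $\|D_k\|_\infty=1$. For ``$\ge$'', the natural tool is the action of the vertical translations and the Bohr character group: $\mathscr{H}_\infty$ carries an isometric action of the infinite polytorus $\mathbb{T}^\infty$ by rotating the primes, and because the $D_k$ are supported on disjoint blocks of primes, one can rotate them almost independently; evaluating at (a limit of) points where the $k$-th block is phased to make $c_k D_k$ nearly attain its norm simultaneously for all $k$ up to $N$ forces $\|D\|_\infty \ge \sum_{k\le N}|c_k|$. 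Concretely this is a standard ``independent blocks'' argument: using that each $D_k$ nearly attains its supremum on $\overline{\mathbb{C}_0}$ (it is uniformly continuous there) and Kronecker's theorem on simultaneous Diophantine approximation to line up the phases. For membership in $\mathscr{M}$, I first note $D \in \mathscr{A}(\mathbb{C}_0)$ since the partial sums converge uniformly (the tail is bounded by $\sum_{k>N}|c_k|\to 0$) and $\mathscr{A}(\mathbb{C}_0)$ is closed. Then $\sigma_a(D)=1/2$: the bound $\sigma_a(D)\le 1/2$ is \eqref{Svalue}; for $\sigma_a(D)\ge 1/2$, since for any single index $k$ with $c_k\ne 0$ the coefficients of $D$ on the block $P_k$ are exactly $c_k$ times those of $D_k$, absolute convergence of $D$ at $\operatorname{Re} s = \sigma$ would force absolute convergence of $D_k$ there, whence $\sigma \ge \sigma_a(D_k)=1/2$.

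The step I expect to be the genuine obstacle is the lower bound $\|\sum_{k} c_k D_k\|_\infty \ge \sum_k |c_k|$, i.e.\ making the ``disjoint blocks rotate independently'' heuristic precise on the half-plane rather than on the polytorus. One has to be careful because the supremum of a Dirichlet series over $\mathbb{C}_0$ corresponds, via Bohr's theorem, to the supremum of the associated holomorphic function over $\mathbb{T}^\infty$ only in a limiting sense (through $\overline{\mathbb{C}_0}$ and the uniform continuity supplied by $D_k\in\mathscr{A}(\mathbb{C}_0)$), and the phases that nearly optimize different $D_k$'s must be realized \emph{simultaneously} by a single vertical line $\operatorname{Re} s = \varepsilon$, $\operatorname{Im} s = t$ — this is exactly where Kronecker's theorem enters, exploiting that the sets of logarithms $\{\log p : p \in P_k\}$ over different $k$ together form a rationally independent family. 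A clean way to organize this is: for each $N$ choose points $s^{(k)}$, $k\le N$, with $|c_k D_k(s^{(k)})| > |c_k| - \delta/N$; by uniform continuity pull these inside $\mathbb{C}_0$; then use Kronecker to find a single $t$ with $p^{-it}$ close to the required phase $p^{-i\operatorname{Im} s^{(k)}}$ for all primes $p \in P_k$ and all $k \le N$ simultaneously, and also arrange the real parts to agree; evaluating $D$ at that point gives a value close to $\sum_{k\le N}|c_k|$. Letting $\delta\to 0$ and $N\to\infty$ completes the lower bound, and together with the upper bound yields that $(c_k)\mapsto \sum_k c_k D_k$ is an isometric embedding of $\ell_1$ whose nonzero image lies in $\mathscr{M}$.
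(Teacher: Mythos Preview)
Your overall strategy coincides with the paper's: build the copy of $\ell_1$ from Dirichlet series $D_k$ with $\sigma_a(D_k)=\tfrac12$ supported on pairwise disjoint blocks of primes, so that both the $\ell_1$-norm and the abscissa are read off blockwise. The paper, however, carries this out on the $B_{c_0}$ side via the Bohr transform, using a \emph{doubly} indexed family $\{P_{k,m}\}$ of $m$-homogeneous polynomials on pairwise disjoint coordinate blocks $\Theta_{k,m}$ (Lemma~\ref{lem-spac}); the homogeneity is precisely what makes the isometry lower bound go through.

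The genuine gap in your sketch is the \emph{phase alignment} in the lower bound for $\bigl\|\sum_k c_k D_k\bigr\|_\infty$. Your Kronecker step manufactures a single $s=\sigma+it$ at which $D_k(s)\approx D_k(s^{(k)})$ for all $k\le N$, hence $D(s)\approx\sum_{k\le N} c_k D_k(s^{(k)})$. But you only chose $s^{(k)}$ so that $|D_k(s^{(k)})|\approx 1$; nothing forces the complex numbers $c_k D_k(s^{(k)})$ to point in the same direction, and in general they will not. For a generic $D_k\in\mathscr A(\mathbb C_0)$ of norm $1$ there is no reason that values of modulus close to $1$ occur with \emph{every} argument: already the one-variable example $f(z)=z+z^2$ on the disc has $\|f\|_\infty=2$, and $|f(e^{i\theta})|=2|\cos(\theta/2)|$ is close to $2$ only when the argument of $f$ is close to $0$, so no torus rotation (equivalently, no vertical translate) brings $f$ close to $-2$. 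Thus the sentence ``evaluating $D$ at that point gives a value close to $\sum_{k\le N}|c_k|$'' does not follow from what precedes it.

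The paper avoids this by using $m$-homogeneous pieces: if $|P_{k,m}(z)|\approx\|P_{k,m}\|$ then $P_{k,m}(e^{i\psi}z)=e^{im\psi}P_{k,m}(z)$ lets one dial any phase, and since the $\Theta_{k,m}$ are disjoint this can be done independently for every $(k,m)$. If you prefer to stay on the half-plane with Kronecker, the simplest repair is to reserve one extra prime $q_k\in P_k$ and replace $D_k$ by $q_k^{-s}\widetilde D_k$ with $\widetilde D_k$ supported on $P_k\setminus\{q_k\}$: the free prime $q_k$ contributes an independent unimodular factor that Kronecker can set to $e^{-i\theta_k}$ once the $\widetilde D_k$ part has been brought close to its norm. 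You should also ensure that each index block contains an arithmetic progression, since the construction of a block-supported series with $\sigma_a=\tfrac12$ (the analogue of Lemma~\ref{lem-spac}) uses the prime number theorem through that hypothesis.
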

We also consider the space of Dirichlet series whose coefficients are square-summable:
\begin{align*}
	\mathscr{H}_2 := \left\{\sum_{n=1}^\infty a_n n^{-s} : \Big\| \sum_{n=1}^\infty a_n n^{-s}\Big\|_2 := \Big(\sum_{n=1}^\infty |a_n|^2\Big)^{1/2} < \infty\right\}.
\end{align*}
The space $\mathscr{H}_2$ is a Hilbert space with the inner product
$$
	\big\langle \sum_{n=1}^\infty a_n n^{-s} , \sum_{n=1}^\infty b_n n^{-s}\big\rangle := \sum_{n=1}^\infty a_n \overline{b_n}.
$$
In this setting we have a following notion of  $\mathscr{H}_2$-abscissa of a Dirichlet series $D = \sum_{n=1}^\infty a_n n^{-s}$.
\begin{align*}
	\sigma_{\mathscr{H}_2}(D) := \inf \left\{ \sigma \in \mathbb{R} : \sum_{n=1}^{\infty} \frac{a_n}{n^\sigma} n^{-s} \mbox{ belongs to } \mathscr{H}_2 \right\}.
\end{align*}
This notion is a natural counterpart of the abscissa of uniform convergence, since the abscissa of uniform convergence of a Dirichlet series $D = \sum_{n=1}^\infty a_n n^{-s}$ can also be reformulated as follows (\cite[Remark 1.23]{DefGarMaeSev19}):
\begin{align*}
	\sigma_u(D) = \inf \left\{ \sigma \in \mathbb{R} : \sum_{n=1}^{\infty} \frac{a_n}{n^\sigma} n^{-s} \mbox{ belongs to } \mathscr{H}_\infty \right\}.
\end{align*}
With this new abscissa, Bohr's absolute convergence problem for $\mathscr{H}_2$ becomes to determine the number
\[
S_2 : = \sup_{D \text{ Dir. ser.}} \sigma_{a} (D) - \sigma_{\mathscr{H}_2} (D) \,.
\]
Although the set $\mathscr{H}_2$ is bigger than $\mathscr{H}_\infty$, the solution for this problem is also $S_2 = \frac{1}{2}$ (see \cite[Remark 11.3]{DefGarMaeSev19}). With a translation argument, we can rewrite this fact as
\begin{align*}
	S_2 = \sup_{D \in \mathscr{H}_2} \sigma_a(D) = \dfrac{1}{2}.
\end{align*}
Note that, since $\mathscr{A}(\mathbb{C}_0) \subset \mathscr{H}_\infty \subset \mathscr{H}_2$, the set $\mathscr{M}$ is contained in $\mathscr{H}_2$. However, its spaceability as a subset of $\mathscr{H}_2$ is not a consequence of Theorem~\ref{thm-radius-spac}, since we have different norms and $\mathscr{A}(\mathbb{C}_0)$ is not closed in $\mathscr{H}_2$.
Our result concerning $\mathscr{H}_2$ reads as follows.

\begin{theorem} \label{thm-absc-spaceable-H2}

The set $\mathscr{M}$ is maximal spaceable in  $\mathscr{H}_2$. More precisely, there is an isometric copy of $\ell_2$ in $\mathscr{H}_2$ which is contained in $\mathscr{M} \cup \{0\}$.
\end{theorem}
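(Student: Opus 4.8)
The plan is to deduce the statement from the construction of a single sequence $(D_k)_{k\ge 1}$ in $\mathscr A(\mathbb C_0)$ with three properties: (a) the $D_k$ have pairwise disjoint supports in $\mathbb N$ and $\|D_k\|_{\mathscr H_2}=1$, so they form an orthonormal system in $\mathscr H_2$; (b) $\sigma_a(D_k)=\tfrac12$ for every $k$; and (c) there is a constant $C$ with $\big\|\sum_{k\in F}c_k D_k\big\|_{\mathscr H_\infty}\le C\big(\sum_{k\in F}|c_k|^2\big)^{1/2}$ for every finite $F\subset\mathbb N$ and all scalars $c_k$. Granting this, I would set $T\colon\ell_2\to\mathscr H_2$, $T(c)=\sum_k c_k D_k$: by (a) it is a linear isometry onto a closed subspace $V\subset\mathscr H_2$, and by (c) the partial sums $\sum_{k\le N}c_k D_k$ are Cauchy in $\mathscr H_\infty$, so $T(c)$ is also their $\mathscr H_\infty$-limit and hence lies in $\mathscr A(\mathbb C_0)$, which is closed in $\mathscr H_\infty$; thus $V\subset\mathscr A(\mathbb C_0)$. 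Writing $D_k=\sum_n a^{(k)}_n n^{-s}$, for $c\ne 0$ pick $j$ with $c_j\ne 0$; disjointness of supports gives $\sum_n|\widehat{T(c)}(n)|\,n^{-\sigma}=\sum_k|c_k|\sum_n|a^{(k)}_n|\,n^{-\sigma}\ge |c_j|\sum_n|a^{(j)}_n|\,n^{-\sigma}$, which diverges for every $\sigma<\tfrac12$ by (b), whence $\sigma_a(T(c))\ge\tfrac12$, while $\sigma_a(T(c))\le\tfrac12$ because $T(c)\in\mathscr H_\infty$, by \eqref{Svalue}. Hence $V\setminus\{0\}\subset\mathscr M$, and since $V$ is an infinite-dimensional closed subspace of the Hilbert space $\mathscr H_2$ it is an isometric copy of $\ell_2$, proving the theorem.

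For the construction I would take each $D_k=\sum_j a_{k,j}\,Q_{k,j}$, a series convergent in $\mathscr H_\infty$, where $Q_{k,j}$ is a homogeneous Dirichlet polynomial of degree $m_j$ (with $m_j\to\infty$) carrying random signs $\pm1$ on a block $G_{k,j}$ of squarefree integers $n$ with $\Omega(n)=m_j$ whose prime factors lie in a large, growing pool $\Pi_j$ of primes, the blocks $G_{k,j}$ being chosen pairwise disjoint; after normalization this yields (a). For (b) I would use that, by enlarging $\Pi_j$, the absolute sum $\sum_{n\in G_{k,j}}n^{-\sigma}$ can be made arbitrarily large compared with $\|Q_{k,j}\|_{\mathscr H_\infty}$ — this is precisely the mechanism behind $S=\tfrac12$, i.e. Bohr's argument together with the sharpness of the Bohnenblust–Hille inequalities, whose exponent $\tfrac{2m}{m+1}$ tends to $2$ — so that a suitably fast-decaying choice of the amplitudes $a_{k,j}$ keeps $\sum_j|a_{k,j}|\,\|Q_{k,j}\|_{\mathscr H_\infty}<\infty$ (hence $D_k\in\mathscr A(\mathbb C_0)$) while still forcing $\sum_j|a_{k,j}|\sum_{n\in G_{k,j}}n^{-\sigma}=\infty$ for every $\sigma<\tfrac12$ (hence $\sigma_a(D_k)=\tfrac12$); note that (b) inevitably requires unbounded degrees, since an $m$-homogeneous element of $\mathscr H_2$ has $\sigma_a\le\tfrac{m-1}{2m}<\tfrac12$.

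The hard part will be (c). Estimating $\big\|\sum_k c_k D_k\big\|_{\mathscr H_\infty}$ by the sum over $j$ of $\big\|\sum_k c_k a_{k,j}Q_{k,j}\big\|_{\mathscr H_\infty}$ is hopeless, because the degree-$m_j$ piece has $\mathscr H_\infty$-norm no smaller than its $\mathscr H_2$-norm, which for suitable $c$ is of order $\|c\|_{\ell_2}$, so the $j$-sum diverges. One must exploit cancellation between distinct homogeneity levels, and this is where the randomization enters: for a random choice of signs, Kahane–Salem–Zygmund (resp. Salem–Zygmund) maximal estimates bound $\big\|\sum_k c_k a_{k,j}Q_{k,j}\big\|_{\mathscr H_\infty}$ on level $j$ by $\|c\|_{\ell_2}$ times a factor governed by $m_j$ and the size of the support, and combining these with a careful organization of the degrees one aims at a global bound $\big\|\sum_k c_k D_k\big\|_{\mathscr H_\infty}\le C\|c\|_{\ell_2}$. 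I expect this to be the main obstacle: property (b) forces arbitrarily large $m_j$, exactly where the gap between the $\mathscr H_\infty$- and $\mathscr H_2$-norms of homogeneous polynomials is widest, so the decay rate of the amplitudes $a_{k,j}$, the growth of the prime pools $\Pi_j$, and the growth of the active sets $K_j=\{k:Q_{k,j}\ne 0\}$ have to be delicately balanced against one another; once they are, a single realization of the signs valid for all the countably many $Q_{k,j}$ at once is pinned down by a union-bound/Borel–Cantelli argument. Finally I would remark that this route is genuinely different from the proof of Theorem~\ref{thm-radius-spac}: the isometric $\ell_1$ found there cannot be rescaled to an $\ell_2$-system, since any disjointly supported family with $\inf_k\|D_k\|_{\mathscr H_2}>0$ has $\sum_k\|D_k\|_{\mathscr H_\infty}^2=\infty$, so the triangle inequality over $k$ is of no help and one is forced into the homogeneous-polynomial machinery above.
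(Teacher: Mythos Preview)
Your overall framework is sound, but the proposal has a genuine gap: property~(c) is never proved. You correctly identify it as ``the hard part'' and offer only a heuristic---random signs, Kahane--Salem--Zygmund bounds on each homogeneity level, and a Borel--Cantelli selection of one good sign pattern---without showing that the level-wise estimates can actually be summed over the unbounded degrees $m_j$ while simultaneously preserving~(b). This is precisely the tension you yourself name, and nothing in the sketch resolves it; as written, the argument that $T(c)\in\mathscr A(\mathbb C_0)$ is incomplete.

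By contrast, the paper's proof avoids~(c) entirely and is far simpler. Instead of a single sequence $(D_k)$, it uses a \emph{doubly indexed} family $\{D_{k,m}\}_{k\ge1,\,m\ge3}$ with pairwise disjoint supports (hence orthonormal in $\mathscr H_2$ after normalization), each $D_{k,m}$ being $m$-homogeneous with $\sigma_a(D_{k,m})\ge\tfrac{m-1}{2m}$, obtained from Lemma~\ref{Lemma-A}. The map
\[
T(\lambda)=\sum_{k\ge1}\lambda_k\sum_{m\ge3}2^{-(m-2)/2}D_{k,m}
\]
is then an isometry $\ell_2\to\mathscr H_2$ by orthogonality alone, and for $\lambda\ne0$ one picks $k_0$ with $\lambda_{k_0}\ne0$ and obtains, by disjointness of supports, $\sigma_a(T(\lambda))\ge\sigma_a(D_{k_0,m})\ge\tfrac{m-1}{2m}$ for \emph{every} $m$, whence $\sigma_a(T(\lambda))=\tfrac12$. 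No $\mathscr H_\infty$-estimate on the image is used. Your observation that one must still verify $T(\lambda)\in\mathscr A(\mathbb C_0)$ to conclude $T(\lambda)\in\mathscr M$ is pertinent---the paper's proof does not address this point explicitly---but the paper's route shows that the isometry and the abscissa computation require none of the delicate probabilistic balancing you propose.
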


As $\mathscr{A}(\mathbb{C}_0)$ is also a Banach algebra we may ask about algebrability of $\mathscr M$. Let us recall the precise definition of strongly algebrable sets which was introduced and coined in \cite{BarGla13}.} Let $X$ be an arbitrary set, $\mathcal{A}$ be an algebra of functions $f \colon X \to \mathbb{C}$ and $\kappa$ be a cardinal number. A subset $Z \subset \mathcal{A}$ is said to be {\it strongly $\kappa$-algebrable} if there is a sub-algebra $\mathcal{B}$ of $\mathcal{A}$ which is generated by an  algebraically independent set of generators with cardinality $\kappa$ and such that $\mathcal{B} \subset Z \cup \{ 0 \}$. With this, we can state our last main theorem.
\begin{theorem} \label{thm-Gdelta}
	The set $\mathscr{M}$ is strongly $\mathfrak{\aleph_0}$-algebrable as a subset of $\mathscr{A}(\mathbb{C}_0)$. Also, there is a dense $G_\delta$ subset of $\mathscr{A}(\mathbb{C}_0)$ such that any of its elements generates a free algebra contained in $\mathscr{M}\cup \{0\}$.
\end{theorem}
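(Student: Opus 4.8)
The plan is to reduce both assertions to a single construction — which I expect to be the main technical obstacle — namely: \emph{for every infinite set of primes $\mathcal Q$ and every $\varepsilon>0$ there is $g\in\mathscr A(\mathbb C_0)$ with $\|g\|_\infty\le\varepsilon$, no constant term, supported on integers all of whose prime divisors lie in $\mathcal Q$, and with $\sigma_a(g^k)=\tfrac12$ for every $k\ge1$}. I would build $g$ by splitting $\mathcal Q=\bigsqcup_{m\ge1}\mathcal Q_m$ into disjoint infinite blocks and setting $g=\sum_{m\ge1}D_m$, where $D_m$ is an $m$-homogeneous Dirichlet series (supported on products of exactly $m$ primes) with all its primes in $\mathcal Q_m$, $\|D_m\|_\infty\le\varepsilon\,2^{-m}$, and $\sigma_a(D_m)\ge\tfrac12-\tfrac1m$. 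Each $D_m$ is produced by superposing, in disjoint variable blocks of $\mathcal Q_m$ with summable scalar weights, the $m$-homogeneous polynomials underlying $S=\tfrac12$ in \eqref{BoBoHi}, \eqref{Svalue}: they have sup-norm $\le1$ while $\sum_n|a_n|n^{-\sigma}$ blows up for all $\sigma<\tfrac{m-1}{2m}$ as the number of variables grows; since $\sigma_a$ is rescaling-invariant and the weights are summable, $D_m$ stays in $\mathscr A(\mathbb C_0)$ with $\sigma_a(D_m)$ as close to $\tfrac{m-1}{2m}$ as needed. Because the $\mathcal Q_m$ are disjoint, the completely additive ``level'' $\ell(n)=\sum_m\tfrac1m\#\{\text{prime factors of }n\text{ in }\mathcal Q_m\}$ equals $k$ on $\mathrm{supp}(g^k)$; hence $g,g^2,g^3,\dots$ have pairwise disjoint supports, and so do the monomials $D_{m_1}\cdots D_{m_k}$ inside each $g^k$. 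Since the abscissa of a product over disjoint blocks of primes is the maximum of the abscissas of the factors, taking the $m_i$ distinct with one of them large gives $\sigma_a(g^k)\ge\sup_m\sigma_a(D_m)=\tfrac12$, while $\sigma_a(g^k)\le\tfrac12$ by \eqref{Svalue}.

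Granting this, strong $\aleph_0$-algebrability is immediate: partition the primes into disjoint infinite sets $(\mathcal Q^{(i)})_{i\ge1}$ and take, via the lemma, $g_i\in\mathscr A(\mathbb C_0)$ supported on $\mathcal Q^{(i)}$-integers, with no constant term and $\sigma_a(g_i^k)=\tfrac12$ for all $k$. The $\mathcal Q^{(i)}$-component of any integer in $\mathrm{supp}(g_1^{\alpha_1}\cdots g_n^{\alpha_n})$ lies in $\mathrm{supp}(g_i^{\alpha_i})$, and as the powers of each $g_i$ already have disjoint supports, distinct monomials in the $g_i$ have disjoint supports. Consequently, for any nonzero $P$ with $P(0)=0$, $\sigma_a(P(g_1,\dots,g_n))=\max\{\sigma_a(g_1^{\alpha_1}\cdots g_n^{\alpha_n}):c_\alpha\ne0\}=\tfrac12$, since each such monomial is a product over disjoint prime sets of factors of abscissa $\tfrac12$; the same disjointness makes $\{g_i\}$ algebraically independent. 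Thus $\{g_i\}$ generates a free subalgebra of $\mathscr A(\mathbb C_0)$ sitting inside $\mathscr M\cup\{0\}$.

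For the dense $G_\delta$ statement, note that $f$ generates a free algebra inside $\mathscr M\cup\{0\}$ exactly when $f$ is non-constant and $\sigma_a(P(f))=\tfrac12$ for every nonzero $P$ with $P(0)=0$; normalizing leading coefficients, this means $f\notin\bigcup_{d\ge1}B_d$ with $B_d=\{f:\exists(c_1,\dots,c_{d-1})\in\mathbb C^{d-1},\ \sigma_a(f^d+\sum_{k=1}^{d-1}c_kf^k)<\tfrac12\}$ (the bound $\le\tfrac12$ being automatic from \eqref{Svalue}). Since $B_d=\bigcup_{\sigma\in\mathbb Q\cap(0,1/2)}\bigcup_{C\in\mathbb N}B_{d,\sigma,C}$ with $B_{d,\sigma,C}=\{f:\exists\,c,\ \sum_n|a_n(f^d+\sum_{k=1}^{d-1}c_kf^k)|n^{-\sigma}\le C\}$, it suffices to show each $B_{d,\sigma,C}$ is nowhere dense; then the ``good'' set is comeager and contains a dense $G_\delta$, every element of which generates a free algebra inside $\mathscr M\cup\{0\}$. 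First, $B_{d,\sigma,C}$ is closed away from the (nowhere dense) line of constants: if $f_j\to f$ with $f$ non-constant, then no nonzero polynomial annihilates $f$ (open mapping theorem), which forces the witnesses $c^{(j)}$ to stay bounded — a nonzero limiting direction would produce such a relation — so a subsequence has $c^{(j)}\to c$, and Fatou's lemma on $\mathbb N$ (using continuity of the coefficient functionals and of $f\mapsto f^k$) puts $f\in B_{d,\sigma,C}$; hence $\overline{B_{d,\sigma,C}}$ differs from $B_{d,\sigma,C}$ by a nowhere dense set. Second, $B_{d,\sigma,C}$ has empty interior: given $f_0$ and $\delta>0$, approximate $f_0$ within $\delta/2$ by a Dirichlet polynomial $p$, let $M$ bound the prime factors occurring in $\mathrm{supp}(p)$, apply the lemma with $\mathcal Q=\{\text{primes}>M\}$ and $\|g\|_\infty<\delta/2$, and set $f=p+g$; expanding $P(f)$ for any monic $P$ of degree $d$ with $P(0)=0$ and using that the level attached to the blocks of $g$ is $0$ on $\mathrm{supp}(p^a)$ and equals $b$ on $\mathrm{supp}(g^b)$, the only part of $P(f)$ supported on level-$d$ integers is $g^d$, so $\sum_n|a_n(P(f))|n^{-\sigma}\ge\sum_n|a_n(g^d)|n^{-\sigma}=\infty$ for \emph{every} such $P$; thus $f\notin B_{d,\sigma,C}$ and $\|f-f_0\|<\delta$. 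Combining the two, $B_{d,\sigma,C}$ is nowhere dense, which finishes the plan.
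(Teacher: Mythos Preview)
Your overall strategy matches the paper's: Baire category for the dense $G_\delta$, disjoint prime supports for algebraic independence, and Bohnenblust--Hille-type $m$-homogeneous building blocks. The technical execution differs in useful ways. The paper defines open dense sets $\mathscr D_\Theta(j,k,\ell,m)$ (Lemmas~\ref{lem-alg02} and~\ref{lem-alg01}), bounding both $\|\lambda\|_\infty$ and $|\lambda_k|$ so that witnesses are compact; you work with the complementary closed nowhere-dense sets $B_{d,\sigma,C}$, and your boundedness-of-witnesses argument (normalize, pass to a limit, get a polynomial relation contradicting non-constancy) is essentially the same idea. For density/empty interior the paper perturbs a Dirichlet polynomial by $2^{-ws}\tfrac{\varepsilon}{2}(1+D_2/k)$ and isolates the relevant homogeneous piece via $\widetilde\Omega$; you instead perturb by a $g$ supported on fresh primes and use your completely additive level function $\ell$ to isolate $g^d$. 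The same level function lets you show directly that the monomials $\prod_i g_i^{\alpha_i}$ have pairwise disjoint supports, so $\sigma_a(P(g_1,\dots,g_n))=\max_\alpha\sigma_a(\prod_i g_i^{\alpha_i})=\tfrac12$; the paper reaches the analogous conclusion through a separate combination lemma (Lemma~\ref{lem-comb-1alg}). Your route is more explicit---you exhibit the single generator $g$ with $\sigma_a(g^k)=\tfrac12$ for all $k$ rather than obtaining it via Baire (Proposition~\ref{pps1})---and the level-function bookkeeping is slicker than the degree tracking in Lemma~\ref{lem-alg01}.

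One caveat: your key lemma is stated \emph{for every infinite set of primes $\mathcal Q$}, but the blow-up $\sum_n|a_n|n^{-\sigma}\to\infty$ for $\sigma<\tfrac{m-1}{2m}$ uses that the primes in each block $\mathcal Q_m$ are dense enough (roughly, the $N$-th element of $\mathcal Q_m$ is $O(N^{1+o(1)})$); for very sparse $\mathcal Q$ the abscissa bound degrades and $\sup_m\sigma_a(D_m)$ can fall short of $\tfrac12$. This is why the paper's Lemma~\ref{lem-spac} assumes $\Theta$ contains an arithmetic progression of prime indices. It does not affect your proof of the theorem, since in both applications you may choose the partitions (e.g.\ arithmetic progressions of indices), but the lemma should carry that hypothesis.
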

The proofs of our main theorems are carried out in {Sections \ref{sec-spaceab}, \ref{Sec.H2} and \ref{Sec.alg-of-M}}. Some technical results used in these proofs are stated and proved in Section \ref{sec-techlem}.

\section{Proof of Theorem \ref{thm-radius-spac}} \label{sec-spaceab}

Let us fix some notations. Let $\mathbb{N}_0$ denote the set of nonnegative integers and $\mathbb{N}_0^{(\mathbb{N})}$ the set of non-negative multi-indices (i.e., sequences of non-negative integers with finite nonzero elements). For any $\alpha = (\alpha_1, \alpha_2, \alpha_3, \ldots) \in \mathbb{N}_0^{(\mathbb{N})}$ we set $supp(\alpha) := \{j \in \mathbb{N} : \alpha_j \not= 0\}$ and $|\alpha| := \sum_j \alpha_j$. Also, for a sequence $z=(z_j)_{j=1}^\infty$, we write
$$z^\alpha := z_1^{\alpha_1} z_2^{\alpha_1} z_3^{\alpha_3} \cdots,$$
where the product is finite since $\alpha$ has finite length.

We now define the so-called \emph{Bohr transform}, which links Dirichlet series with power series in infinitely many variables (first formally, then in a precise way which is fundamental for our purposes). Let  $\mathfrak{p}_k$ denote the $k$-th prime number and write $\mathfrak{p} := (\mathfrak{p}_k)_{k=1}^\infty$.
Each $n \in \N$ has a unique representation of the form $n= \primos^{\alpha} = \mathfrak{p}_1^{\alpha_1}\cdot \cdots \cdot \mathfrak{p}_m^{\alpha_m}$. This one-to-one correspondence $n \in \N \leftrightarrow \alpha \in \N_0^{(\N)}$ allows us to define
\begin{align*}
	\mathfrak{B}: \mathfrak{P} \quad &\xrightarrow{\phantom{ a_n = a_{\primos^{\alpha}}=c_{\alpha}}} \quad \mathfrak{D} \\
	\sum_{\alpha} c_{\alpha}  z^{\alpha}&\xrightarrow{ a_n = a_{\primos^{\alpha}}=c_{\alpha}} \sum_{n=1}^{\infty} \frac{a_{n}}{n^s},
\end{align*}
between the space $\mathfrak{P}$ of formal power series and the space $\mathfrak{D}$ of formal Dirichlet series. The function $\mathfrak{B}$ is called the Bohr transform and is an algebra isomorphism.

It follows from \cite[Theorem 2.19]{DefGarMaeSev19} that any holomorphic function $f : B_{c_0} \to \mathbb{C}$ has a monomial series representation at $0$. That is, there exists a unique family of coefficients $(c_\alpha(f))_{\alpha \in \mathbb{N}_0^{(\mathbb{N})}}$ such that for every $z \in B_{c_{00}}$,
\begin{align} \label{monomial rep.}
	f(z) = \sum_{\alpha \in \mathbb{N}_0^{(\mathbb{N})}} c_\alpha(f) z^\alpha.
\end{align}
The coefficients $c_\alpha(f)$ are called {\it monomial coefficients} of $f$. As consequence, the Banach algebra $H_{\infty}(B_{c_0})$ of bounded holomorphic functions on $B_{c_0}$ can be thought of as a subset of $\mathfrak{P}$. As we can see in  \cite[Theorem 3.8]{DefGarMaeSev19}, Bohr transform is an isometric isomorphism between $H_{\infty}(B_{c_0})$ and $\mathscr{H}_\infty$.

We recall that a continuous $m$-homogeneous polynomial from $c_0$ into $\mathbb{C}$ is the restriction to the diagonal of a
continuous $m$-linear functional. The symbol $\mathscr{P}_m(c_0)$ denotes the vector space of all continuous $m$-homogeneous
polynomials from $c_0$ into $\mathbb{C}$. Since every continuous homogeneous polynomial is a holomorphic function,  we may write any $P \in \mathscr{P}_m(c_0)$ as in (\ref{monomial rep.}) for every $z \in c_{00}$.

The proof of the following lemma is based on the proofs of \cite[Propositions 4.2 and 4.6]{DefGarMaeSev19} (see also \cite{AlbJuaPab}).

\begin{lemma} \label{lem-spac}
	Given $\Theta\subset \mathbb N$ containing an infinite arithmetic progression and  $m \geq 2$, there is an $m$-homogeneous polynomial $P \in \mathscr{P}_m(c_0)$ supported on $\Theta$ (i.e.,  $c_\alpha(P)=0$ whenever $supp(\alpha)\not\subset \Theta$) and such that for every $\varepsilon > 0$, we have
	\begin{align*}
		\sum_{\alpha} |c_\alpha(P)| \frac{1}{(\mathfrak{p}^\alpha)^{\frac{1}{(\frac{2m}{m-1} + \varepsilon)(1 + \varepsilon)}}} = \infty.
	\end{align*}
\end{lemma}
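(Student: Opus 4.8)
The plan is to adapt the classical Bohnenblust--Hille type construction of homogeneous polynomials with slow decay of monomial coefficients, as carried out in \cite[Propositions 4.2 and 4.6]{DefGarMaeSev19}, while keeping track of the extra constraint that the polynomial be supported on $\Theta$. First I would reduce to the case in which $\Theta$ \emph{is} an infinite arithmetic progression: if $\Theta$ merely contains one, say $\{a+kd : k \in \mathbb{N}_0\}$, any polynomial supported on that progression is automatically supported on $\Theta$, so we lose nothing. Relabelling the variables $z_j \mapsto z_{a+(j-1)d}$, it suffices to produce, for each $m \geq 2$, an $m$-homogeneous polynomial $P \in \mathscr{P}_m(c_0)$ (in the original variables) whose coefficients satisfy the displayed divergence, and then reindex; the point is that reindexing the variables only changes which primes $\mathfrak{p}^\alpha$ picks up, and since the exponent $\frac{1}{(\frac{2m}{m-1}+\varepsilon)(1+\varepsilon)}$ is strictly less than $\frac{m-1}{2m}$, the prime number theorem ($\mathfrak{p}_k \sim k \log k$) gives enough room to absorb the resulting constants.

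Next I would invoke the sharp form of the Bohnenblust--Hille inequality together with its near-optimality: for each $m$ and each $N$ there exist $m$-homogeneous polynomials $P_N$ on $\mathbb{C}^N$ (Steinhaus/Rademacher random polynomials, or the explicit Kahane--Salem--Zygmund construction) with $\|P_N\|_{\infty} \leq C$ but $\big(\sum_\alpha |c_\alpha(P_N)|^{\frac{2m}{m+1}}\big)^{\frac{m+1}{2m}} \gtrsim N^{\frac{m-1}{2m}}$, and more usefully for us, such that the $\ell_1$-norm of the coefficients grows like a positive power of $N$ that beats any prescribed weight of the above form. Concretely, following the proof of \cite[Proposition 4.2]{DefGarMaeSev19}, one builds a lacunary-in-scale direct-sum polynomial $P = \sum_{t} \lambda_t \widetilde{P}_{N_t}$, where the blocks $\widetilde{P}_{N_t}$ use disjoint groups of variables (all chosen inside the arithmetic progression), $N_t \to \infty$ fast, and the scalars $\lambda_t$ are chosen so that $\sum_t |\lambda_t| \|\widetilde P_{N_t}\|_\infty < \infty$ (ensuring $P \in \mathscr{P}_m(c_0)$, by the argument that a sup-convergent sum of polynomials on disjoint variables with $c_0$-type control is again a bounded polynomial on $c_0$) while $\sum_t |\lambda_t| \cdot (\text{weighted } \ell_1 \text{ mass of } \widetilde P_{N_t}) = \infty$. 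The weighted mass of the $t$-th block is estimated from below by pulling out the smallest weight appearing in that block — here is where the primes enter: the variables of block $t$ correspond to primes $\mathfrak{p}_{j}$ with $j$ in a window of the arithmetic progression, so $\mathfrak{p}^\alpha \leq (\mathfrak{p}_{j_{\max}})^m$ for $|\alpha|=m$, and by the prime number theorem $\mathfrak{p}_{j_{\max}} \leq C\, j_{\max} \log j_{\max}$; raising to the power $\frac{1}{(\frac{2m}{m-1}+\varepsilon)(1+\varepsilon)}$ and comparing with the $N_t^{\frac{m-1}{2m}}$ growth of the unweighted $\ell_1$-mass shows the weighted mass of block $t$ still tends to infinity, uniformly in $\varepsilon$ once $\varepsilon$ is small, so a suitable choice of $\lambda_t$ (e.g.\ $\lambda_t = 1/(t^2 \|\widetilde P_{N_t}\|_\infty)$) makes the two series behave as required.

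The main obstacle I anticipate is bookkeeping the dependence on $\varepsilon$ so that a \emph{single} polynomial $P$ works simultaneously for all $\varepsilon>0$, rather than one $P$ per $\varepsilon$. The key observation that resolves this is that the critical exponent is $\frac{m-1}{2m}$ and for every $\varepsilon > 0$ one has $\frac{1}{(\frac{2m}{m-1}+\varepsilon)(1+\varepsilon)} < \frac{m-1}{2m}$, with the gap to $\frac{m-1}{2m}$ shrinking as $\varepsilon \to 0$; so it is enough to arrange that already with the \emph{borderline} weight $(\mathfrak{p}^\alpha)^{-\frac{m-1}{2m}}$ the weighted $\ell_1$-mass of each block grows (which is exactly the content of the near-optimality of Bohnenblust--Hille, since $\sum_\alpha |c_\alpha(\widetilde P_{N_t})| (\mathfrak p^\alpha)^{-\frac{m-1}{2m}}$ dominates, up to constants depending only on $m$, the quantity governing the abscissa $\sigma_a = 1/2$), and then monotonicity of $x \mapsto x^{-\beta}$ in $\beta$ gives the divergence for every smaller exponent, hence for every $\varepsilon>0$. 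A secondary technical point is verifying that the infinite sum $\sum_t \lambda_t \widetilde P_{N_t}$ genuinely defines an element of $\mathscr{P}_m(c_0)$ and not merely a bounded holomorphic function on $B_{c_0}$; this follows because each $\widetilde P_{N_t}$ is $m$-homogeneous on a finite set of coordinates, the blocks are coordinate-disjoint, and the tail sums converge in the polynomial norm by the choice of $\lambda_t$, so the limit is $m$-homogeneous and continuous on $c_0$ — this is precisely the mechanism used in \cite[Proposition 4.6]{DefGarMaeSev19}.
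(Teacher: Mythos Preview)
Your block construction is essentially the one the paper uses (and is the right idea): build $P=\sum_k Q_k$ from KSZ-type $m$-homogeneous pieces $Q_k$ supported on disjoint coordinate blocks inside the arithmetic progression, with $\|Q_k\|_\infty\le 1/k^2$. Where your proposal goes wrong is in the handling of the $\varepsilon$-dependence.

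First, the ``obstacle'' you raise is not real. The polynomial $P$ is built once, with no reference to $\varepsilon$; what has to be checked is that, for each fixed $\varepsilon>0$, the weighted series diverges. There is no uniformity in $\varepsilon$ to worry about. In fact your own crude estimate (pull out the smallest weight in each block, use $\mathfrak p_n\le C n^{1+\varepsilon}$) already gives, for the block of size $p^k$, a contribution of order
\[
\frac{1}{k^2}\,p^{k\left(\frac{m-1}{2}-m\beta(1+\varepsilon)\right)},\qquad \beta=\frac{1}{(\frac{2m}{m-1}+\varepsilon)(1+\varepsilon)},
\]
and since $\beta(1+\varepsilon)<\frac{m-1}{2m}$ the exponent is strictly positive, so the sum over $k$ diverges. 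That is the whole proof; the paper does the same computation via an auxiliary sequence $w=w(\varepsilon)\in \ell_{\frac{2m}{m-1}+\varepsilon}$, but your direct route also works.

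Second, the fix you propose---arrange divergence at the \emph{borderline} exponent $\frac{m-1}{2m}$ and then invoke monotonicity---does not work with this construction and is not obviously achievable at all. Plugging $\beta=\frac{m-1}{2m}$ into the display above makes the exponent nonpositive, and the block contributions become $O(k^{-2})$, a convergent series. So the monotonicity shortcut, while logically valid, rests on a premise (divergence at the critical exponent) that your polynomials do not satisfy. Relatedly, your claim that the weighted block mass tends to infinity ``uniformly in $\varepsilon$ once $\varepsilon$ is small'' is false: the geometric rate degenerates as $\varepsilon\to 0$. Drop the uniformity claim and the borderline argument; the per-$\varepsilon$ computation already closes the proof.
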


\begin{proof} We may suppose that $\Theta$ is actually an infinite arithmetic progression: if the result holds for such sets, it clearly holds for larger ones. So we take $\Theta= \{u + k v : k \in \mathbb{N}\}$ for some $u,v \in \mathbb{N}$
	We fix  a prime number $p>m$ and subdivide $\Theta$ in blocks:
	$$\Theta = \big\{ {{n^{1}_{1}},\dots,{n^{p}_{1}}},{n^{1}_{2}},\dots,{n^{p^{2}}_{2}},
	{n^{1}_{3}},\dots,{n^{p^{3}}_{3}},{n^{1}_{4}},\dots,{n^{p^{4}}_{4}},
	{n^{1}_{5}},\dots,{n^{p^{5}}_{5}},\dots    \big\}.$$
	Now, for every $k \in \mathbb{N}$ let $B^{(k)}$ denote the subset of $\Theta$ of indices lying in the $k$th block, i.e.,
	$$B^{(k)}=\big\{ {{n^{1}_{k}},\dots,{n^{p^k}_{k}}}  \big\}.$$
	With the previous notations in mind, we define the contractions
	$\Pi_{k}:c_{0}\longrightarrow \ell_\infty^{p^{k}} $ by
	$$\Pi_k(z) := \sum_{ j=1}^{p^{k}} z_{n^{j}_{k}} e_j.$$
	
	From \cite[Lemma 4.7]{DefGarMaeSev19} we can take a sequence $(R_k)_{k=1}^\infty$ of continuous $m$-homogeneous polynomials satifying
	$$\|R_k : \ell_\infty^{p^k} \to \mathbb{C}\|_\infty \leq p^{k \frac{m+1}{2}}, \ \ k \in \mathbb{N},$$
	and
	$$\eta := \inf\{|c_\alpha(R_k)| : k \in \mathbb{N}, \alpha \in \mathbb{N}_0^{p^k}, |\alpha| = m\} > 0.$$
	Now, for each $k \in \mathbb N$, we define $Q_{k}:c_{0}\longrightarrow\mathbb{C}$ by  $$Q_{k}(z) := \frac{1}{k^{2}}p^{-k\frac{m+1}{2}}(R_{k}\circ \Pi_{k})(z),$$
	which is a well-defined $m$-homogeneous polynomial with  $\|Q_k\| \leq 1/k^2$. Moreover, by construction, we have $c_\alpha(Q_k)\ne 0$ if and only if $supp (\alpha)\subset B^{(k)}$.
	Thus, we can (finally) define the desired continuous $m$-homogeneous polynomial $P : c_{0} \longrightarrow \mathbb{C}$  by
	$$P(z) := \sum_{k=1}^{\infty}Q_{k}(z).$$
	Note that $\|P\| \leq \pi^2 / 6$.	Let us prove that $P$ satisfies the requirements of the lemma. First, since the sets $B^{(k)}$s are pairwise disjoint, it follows that \begin{equation}\label{c-alphas-nonzero}
		c_\alpha(P)\ne 0 \quad\text{if and only if}\quad  supp(\alpha) \subset B^{(k)} \text{ for some }k\in \mathbb N.
	\end{equation}
	In particular, $c_\alpha(P)=0$ whenever $supp(\alpha)\not\subset \Theta$.
	
	We now let $\varepsilon > 0$ and choose  $0<\delta<1$ satisfying $\frac{2m}{m-1}+\varepsilon=\frac{2m}{m-1}\frac{1}{1-\delta}$. We also take $0<b<1$ such that $p^{\delta}b^{1-\delta}>1$. Define $w = (w_{\ell})_{\ell = 1}^\infty$  by
	\[
	w_{\ell} :=
	\begin{cases}
		\left(\dfrac{b}{p}\right)^{k \frac{m-1}{2m} (1-\delta)} & \text{if $\ell \in B^{(k)}$};\\
		0 & \text{otherwise.}
	\end{cases}
	\]
	It is easy to check that  $\lim\limits_{\ell \to \infty} w_\ell = 0$, i.e., $w\in c_0$. By the uniqueness of the monomial coefficients, we also have
	
	\begin{align*}
		\sum_{|\alpha| = m} | c_\alpha(P) w^\alpha | & =  \sum_{k=1}^{\infty}\frac{1}{k^{2}}p^{-k\frac{m+1}{2}} \sum_{\substack{\alpha \in \mathbb{N}_0^{(\mathbb{N})} \\ \vert\alpha\vert=m \\ supp(\alpha)\subset B^{(k)}}} \vert c_{\alpha}\left(R_{k}\circ \Pi_k\right)w^{\alpha}\vert
		\\ &= \sum_{k=1}^{\infty}\frac{1}{k^{2}}p^{-k\frac{m+1}{2}} \sum_{\substack{\beta \in \mathbb{N}_0^{p^k} \\ \vert\alpha\vert=m }} \vert c_{\beta}\left(R_{k}\right)(\Pi_k(w))^{\beta}\vert
		\\ &= \sum_{k=1}^{\infty}\frac{1}{k^{2}}p^{-k\frac{m+1}{2}} \sum_{\substack{\beta \in \mathbb{N}_0^{p^k} \\ \vert\alpha\vert=m }} \vert c_{\beta}\left(R_{k}\right)\vert \left( \frac{b}{p} \right)^{k(m-1)(1-\delta)/2}.
	\end{align*}
	Proceeding as in \cite[p. 105]{DefGarMaeSev19}, we obtain		
	\begin{align} \label{eq-of-nonanaly}
		\sum_{|\alpha| = m}|c_\alpha(P)w^\alpha|  &\geq \frac{\eta}{m!} \sum_{k = 1}^\infty \frac{1}{k^2}(p^\delta b^{1-\delta})^{\frac{m-1}{2}k}=+\infty \,
	\end{align}
	since $p^\delta b^{1-\delta} > 1$. We see also that  $w \in \ell_{\frac{2m}{m-1}+\varepsilon}$, since
	\begin{align*}
		\sum_{j = 1}^\infty (w_{j})^{\frac{2m}{m-1}\frac{1}{1 - \delta}} = \sum_{k=1}^\infty \sum_{\ell = 1}^{p^k} \left(\frac{b}{p}\right)^{k \frac{m-1}{2m}(1-\delta)\frac{2m}{m-1} \frac{1}{1-\delta}} = \sum_{k=1}^\infty b^k < \infty.
	\end{align*}
	Since the sequence $\left(w_{u + k v}\right)_{k = 1}^\infty$ is decreasing, for each  $k \in \mathbb{N}$ we have
	\begin{align*}
		w_{u + k v} (u + k v)^{\frac{1}{{(\nicefrac{2m}{m-1})+\varepsilon}}} & \leq \bigg(\left(\dfrac{u}{k}+v\right) \, \sum_{\ell = 1}^{k} \left(w_{u + \ell v}\right)^{\frac{2m}{m-1}+\varepsilon}\bigg)^{\frac{1}{\frac{2m}{m-1}+\varepsilon}} \\ & \leq (u+v)^{\frac{1}{{(\nicefrac{2m}{m-1})+\varepsilon}}} \Vert w\Vert_{\frac{2m}{m-1}+\varepsilon}=:K.
	\end{align*}
	Recalling that  $w_\ell=0$ for $\ell\not\in \Theta$, we actually obtain
	$$w_\ell \, \ell^{\frac{1}{{(\nicefrac{2m}{m-1})+\varepsilon}}} \leq K$$
	for all $\ell \in \mathbb N$. By the prime number theorem, there is a constant $C>0$ such that  ${p}_{n}\leq Cn^{1+\varepsilon}$ for every $n\in\mathbb{N}$. Thus, for any $\alpha = (\alpha_1, \ldots, \alpha_N, 0, 0, \ldots) \in \mathbb{N}_{0}^{(\mathbb{N})}$ with $\vert \alpha\vert=m$, we have
	\begin{eqnarray*}			\frac{1}{\left(\mathfrak{p}^{\alpha}\right)^\frac{1}{{\left(\frac{2m}{m-1}+\varepsilon\right)(1+\varepsilon)}}}&=&\frac{1}{\left(\mathfrak{p}_{1}^{\alpha_{1}}\mathfrak{p}_{2}^{\alpha_{2}} \cdots \mathfrak{p}_{N}^{\alpha_{N}}\right)^\frac{1}{{\left(\frac{2m}{m-1}+\varepsilon\right)(1+\varepsilon)}}}
		\nonumber	\\
		&\geq&\frac{1}{\left(\left(C \, 1^{1+\varepsilon}\right)^{\alpha_{1}}\left(C \, 2^{1+\varepsilon}\right)^{\alpha_{2}} \cdots \left(C \, N^{1+\varepsilon}\right)^{\alpha_{N}}\right)^\frac{1}{{\left(\frac{2m}{m-1}+\varepsilon\right)(1+\varepsilon)}}}
		\nonumber	\\ &\geq& { \frac{1}{C^{\frac{m}{\left(\frac{2m}{m-1}+\varepsilon\right)(1+\varepsilon)}}}\left(\frac{w_{1}}{K}\right)^{\alpha_{1}}\left(\frac{w_{2}}{K}\right)^{\alpha_{2}} \cdots \left(\frac{w_{N}}{K}\right)^{\alpha_{N}}}
		\nonumber	\\ &=& {\frac{1}{C^{\frac{m}{\left(\frac{2m}{m-1}+\varepsilon\right)(1+\varepsilon)}}K^{m}}w^{\alpha}.}
	\end{eqnarray*}
	This gives
	\begin{align*}
		\sum_{|\alpha| = m} |c_\alpha(P)| \frac{1}{(\mathfrak{p}^\alpha)^{\frac{1}{(\frac{2m}{m-1} + \varepsilon)(1 + \varepsilon)}}} \geq \dfrac{1}{C^{\frac{m}{\left(\frac{2m}{m-1}+\varepsilon\right)(1+\varepsilon)}}K^{m}} \cdot {\sum_{|\alpha| = m} |c_\alpha(P){w}^\alpha|},
	\end{align*}
	and hence the proof follows from $(\ref{eq-of-nonanaly})$.
\end{proof}

Now we are ready to prove Theorem \ref{thm-radius-spac}.

\begin{proof}[\underline{Proof of Theorem \ref{thm-radius-spac}}]
	First note that we can choose a countably infinite family of  $\Theta$s as in the previous Lemma that are pairwise disjoint.  Let us write such a family as  $\{\Theta_{k,m} : k,m \in\mathbb{N}\}$. It follows from Lemma \ref{lem-spac} that for every $m \geq 2$ and every $k \in \mathbb{N}$, there is $P_{k,m} \in \mathscr{P}_m(c_0)$ such that, for every $\varepsilon > 0$,
	\begin{align} \label{Eq-spac-inf}
		\sum_{|\alpha| = m} |c_\alpha(P_{k,m})| \frac{1}{(\mathfrak{p}^\alpha)^{\frac{1}{(\frac{2m}{m-1} + \varepsilon)(1 + \varepsilon)}}} = \infty
	\end{align}
	and that $c_\alpha(P_{k,m}) = 0$ if $supp(\alpha) \not\subset \Theta_{k,m}$ (that is, $P_{k,m}$ is supported on $\Theta_{k,m}$). We define the mapping
	\begin{align}
		T : (\lambda_k)_{k=1}^\infty \in \ell_1 \mapsto \sum_{k=1}^\infty \lambda_k \sum_{m=2}^\infty \frac{1}{2^{m-1}} \frac{P_{k,m}}{{\|P_{k,m}\|}_\infty} \in H_\infty(B_{c_0}).
	\end{align}
	Note the $T$ is a well-defined linear operator. To complete the proof, we only need to show that $T$ is an isometry and $\sigma_a((\mathfrak{B} \circ T)(\lambda_k)) = 1/2$ for each $(\lambda_k) \in \ell_1 \setminus \{0\}$.
	
	To see that $T$ is an isometry, we first note that
	\begin{align} \label{Des-isom}
		\|T((\lambda_k)_{k=1}^\infty)\|_\infty = \left\|\sum_{k=1}^\infty \lambda_k \sum_{m=2}^\infty \frac{1}{2^{m-1}} \frac{P_{k,m}}{{\|P_{k,m}\|}_\infty}\right\|_\infty \leq \sum_{k=1}^\infty |\lambda_k|.
	\end{align}
	Now, we take $N, M \in \mathbb{N}$ with $M\ge 2$. We write $\lambda_k = |\lambda_k| e^{i\theta_k}, 0 \leq \theta_k < 2 \pi$, for $k = 1,2, \ldots, N$. Since $P_{k,m}$ is supported on $\Theta_{k,m}$, for each $k = 1, \ldots, N$ and each $m = 2, \ldots, M$, we may find $(z_{k,m}^\ell)_{\ell = 1}^\infty \subset B_{c_0}$ with $supp(z_{k,m}^\ell) \subset \Theta_{k,m}$ and $\lim\limits_{\ell \to \infty} P_{k,m}(z_{k,m}^\ell) = {\|P_{k,m}\|}_\infty e^{-i\theta_k}$. Hence, for $z_\ell := \sum_{k=1}^N \sum_{m=2}^M z_{k,m}^\ell \in B_{c_0}$, we get
	\begin{align*}
		\left\|\sum_{k=1}^\infty \lambda_k \sum_{m=2}^\infty \frac{1}{2^{m-1}} \frac{P_{k,m}}{{\|P_{k,m}\|}_\infty}\right\|_\infty &\geq \left|\sum_{k=1}^\infty \lambda_k \sum_{m=2}^\infty \frac{1}{2^{m-1}} \frac{P_{k,m}(z_\ell)}{{\|P_{k,m}\|}_\infty}\right| \\ & \hspace{-1in} = \left|\sum_{k=1}^N \lambda_k \sum_{m=2}^M \frac{1}{2^{m-1}} \frac{P_{k,m}(z_{k,m}^\ell)}{{\|P_{k,m}\|}_\infty}\right| \xrightarrow{\ell \to \infty} \sum_{k=1}^N |\lambda_k| \left(1 - \frac{1}{2^{M-1}}\right).
	\end{align*}
	Since $M$ is arbitrary, we get equality in (\ref{Des-isom}) and $T$ is an isometry.
	
	Now, we prove that $\sigma_a((\mathfrak{B} \circ T)(\lambda_k)) = 1/2$ for each $(\lambda_k) \in \ell_1 \setminus \{0\}$.	We take  $\varepsilon > 0$ and a nonzero $(\lambda_k) \in \ell_1$, and choose $k_0 \in \mathbb{N}$ with $\lambda_{k_0} \not= 0$. Also, we write $$(\mathfrak{B} \circ T)(\lambda_k) = \sum_{n=1}^\infty a_n n^{-s}.$$ Since the sets $\Theta_{k,m}$ are pairwise disjoint, the polynomials $P_{k,m}$ have mutually disjoint supports. It follows from \cite[Theorem 3.8]{DefGarMaeSev19}, \cite[Theorem 2.19]{DefGarMaeSev19} and (\ref{Eq-spac-inf}) that for every $m \geq 2$ and every $\varepsilon > 0$ we have
	\begin{align*}
		\sum_{n=1}^\infty |a_n| \frac{1}{n^{\frac{1}{(\frac{2m}{m-1} + \varepsilon)(1 + \varepsilon)}}} &= \sum_{k=1}^\infty \sum_{\ell=2}^\infty \sum_{\underset{supp(\alpha)\subset\Theta_{k,\ell}}{|\alpha| = \ell}} |\lambda_k| 2^{1-\ell} \frac{|c_\alpha(P_{k,\ell})|}{{\|P_{k,\ell}\|}_\infty} \frac{1}{(\mathfrak{p}^\alpha)^{\frac{1}{(\frac{2m}{m-1} + \varepsilon)(1 + \varepsilon)}}} \\ &\geq \sum_{\underset{supp(\alpha)\subset\Theta_{k_{0},m}}{|\alpha| = m}} |\lambda_{k_0}| 2^{1-m} \frac{|c_\alpha(P_{k_0,m})|}{{\|P_{k_0,m}\|}_\infty} \frac{1}{(\mathfrak{p}^\alpha)^{\frac{1}{(\frac{2m}{m-1} + \varepsilon)(1 + \varepsilon)}}} = \infty.
	\end{align*}
	Since $m\in \N$ and $\varepsilon>0$ are arbitrary, this proves that $\sigma_a((\mathfrak{B} \circ T(\lambda_k))) = 1/2$.
\end{proof}

{ \section{Proof of Theorem \ref{thm-absc-spaceable-H2}} \label{Sec.H2}}

We could prove Theorem \ref{thm-absc-spaceable-H2} proceeding as in the proof of Theorem \ref{thm-radius-spac}, making use of isometric identification between $\mathscr H_2$ and the Hardy space $H_2(\mathbb T^\infty)$ of functions in the infinite polytorus. However, we prefer to give a different proof, working directly on the Dirichlet series framework. On the one hand, in this way we do not need to define Hardy spaces in the infinite polytorus and the mentioned identification. On the other hand, we take the opportunity to introduce some concepts and notions that are necessary in the following sections.

Let us fix some notations. For a complex number $s \in \mathbb{C}$, we always let $s := \sigma + it$. Also, for every Dirichlet series $D = \sum_{n=1}^\infty a_n n^{-s}$, we set $$A_N(D,s) := \sum_{n=1}^N |a_n| n^{-s}.$$ Note that for $\delta>0$ we have that
\begin{equation}\label{eq-ANimpliessigma}
	\lim_{N\to +\infty}A_N(D,\delta) =+\infty \quad\text{implies}\quad\sigma_a(D) \ge \delta.
\end{equation}

We may formally write
$$\sum_{n=1}^\infty a_n n^{-s} = \sum_{m=0}^\infty \sum_{\Omega(n) = m} a_n n^{-s},$$
where the function $\Omega(n)$ counts the number of prime divisors of $n$, counted with multiplicity. Let $\mathfrak{D}_m$ be the set of this Dirichlet series for which $a_n \not= 0$ only if $\Omega(n) = m$. The series in $\mathfrak{D}_m$ are called \emph{$m$-homogeneous}. We also denote $\mathscr{H}^m_\infty := \mathscr{H}_\infty \cap \mathfrak{D}_m$ the closed subspace of $\mathscr{H}_\infty$ consisting of all $m$-homogeneous Dirichlet series. By means of Bohr's transform $\mathscr{P}_m(c_0)$ is isometrically isomorphic to $\mathscr{H}^m_\infty$ (see \cite[Theorem 3.12]{DefGarMaeSev19}). Hence, since $\mathscr{A}(\mathbb{C}_0)$ is isometrically isomorphic to the algebra of uniformly continuous holomorphic functions on $B_{c_0}$ via Bohr's transform (see \cite[Theorem 2.5]{AroBayGauMaeNes}), we have $\mathscr{H}_\infty^m \subset \mathscr{A}(\mathbb{C}_0)$.

For   $D= \sum_{n=1}^\infty a_n n^{-s}$ let $D^{(m)}$ be its $m$-homogeneous part
$$ D^{(m)}= \sum_{\Omega(n) = m} a_n n^{-s}.$$
It is clear that \begin{equation}\label{AN-homogeneo}
	A_N(D,\delta) \ge A_N(D^{(m)},\delta)
\end{equation}
for every $\delta >0$. Combining this with \eqref{eq-ANimpliessigma} we see that $\sigma_a(D)\ge \sigma_a(D^{(m)})$ for all Dirichlet series~$D$.

For $\Theta \subset \mathbb N$ we set
$$\mathscr{A}_\Theta(\mathbb{C}_0) := \big\{D = \sum_{n = 1}^\infty a_n n^{-s} \in \mathscr{A}(\mathbb{C}_0) : a_n = 0 \mbox{ if } \mathfrak{p}_i | n \mbox{ for some } i \not\in \Theta\big\}.$$
Note that the set $\mathscr{A}_\Theta(\mathbb{C}_0)$ is closed under sum and Dirichlet multiplication, and so it is a subalgebra of $\mathscr{A}(\mathbb{C}_0)$. Also, since ``taking $n$th coefficient'' is continuous (see \cite[Proposition 1.19]{DefGarMaeSev19})  $\mathscr{A}_\Theta(\mathbb{C}_0)$ is closed subalgebra of $\mathscr{A}(\mathbb{C}_0)$.

The following lemma is essentially a consequence of Lemma \ref{lem-spac} through the Bohr transform.
\begin{lemma}\label{Lemma-A}
	For every infinite subset $\Theta \subset \mathbb{N}$ containing an infinite arithmetic progression and every $M > m \geq 2$ there is $D \in \mathscr{H}_\infty^M \cap \mathscr{A}_\Theta(\mathbb{C}_0)$ satisfying
	$$\lim\limits_{N \to \infty} A_N(D, \delta_m) = \infty \ \ \mbox{with} \ \  \delta_m := \dfrac{m-1}{2m}.$$
\end{lemma}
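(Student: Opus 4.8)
The plan is to obtain $D$ directly from Lemma~\ref{lem-spac} by transporting the polynomial there through the Bohr transform. Concretely, given $\Theta$ containing an infinite arithmetic progression and $M > m \ge 2$, apply Lemma~\ref{lem-spac} with the exponent $M$ in place of $m$ to produce an $M$-homogeneous polynomial $P \in \mathscr{P}_M(c_0)$ supported on $\Theta$ such that $\sum_\alpha |c_\alpha(P)| (\mathfrak{p}^\alpha)^{-\frac{1}{(\frac{2M}{M-1}+\varepsilon)(1+\varepsilon)}} = \infty$ for every $\varepsilon > 0$. Since $\mathscr{P}_M(c_0)$ is isometrically isomorphic to $\mathscr{H}_\infty^M$ via Bohr's transform (\cite[Theorem 3.12]{DefGarMaeSev19}), and $\mathscr{H}_\infty^M \subset \mathscr{A}(\mathbb{C}_0)$ as noted above, we may set $D := \mathfrak{B}(P) = \sum_{\Omega(n)=M} a_n n^{-s}$, where $a_{\primos^\alpha} = c_\alpha(P)$. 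Because $P$ is supported on $\Theta$, i.e.\ $c_\alpha(P) = 0$ whenever $supp(\alpha) \not\subset \Theta$, we get that $a_n = 0$ whenever $\mathfrak{p}_i \mid n$ for some $i \notin \Theta$, so $D \in \mathscr{H}_\infty^M \cap \mathscr{A}_\Theta(\mathbb{C}_0)$ as required.

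It remains to check the growth condition $\lim_{N \to \infty} A_N(D, \delta_m) = \infty$ with $\delta_m = \frac{m-1}{2m}$. The idea is that the divergent series from Lemma~\ref{lem-spac} is exactly $\sum_n |a_n| n^{-\sigma}$ for a suitable $\sigma$, and one only needs to observe that the exponent $\frac{1}{(\frac{2M}{M-1}+\varepsilon)(1+\varepsilon)}$ can be made to lie below $\delta_m$. Indeed, since $M > m$ we have $\frac{2M}{M-1} < \frac{2m}{m-1}$, hence $\frac{M-1}{2M} > \frac{m-1}{2m} = \delta_m$; so for $\varepsilon > 0$ small enough, $\frac{1}{(\frac{2M}{M-1}+\varepsilon)(1+\varepsilon)}$ is still strictly larger than $\delta_m$. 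Fix such an $\varepsilon$ and set $\sigma := \frac{1}{(\frac{2M}{M-1}+\varepsilon)(1+\varepsilon)} > \delta_m$. Then
\begin{align*}
	\sum_{n=1}^\infty |a_n| n^{-\delta_m} = \sum_{n=1}^\infty |a_n| n^{\sigma - \delta_m} n^{-\sigma} \ge \sum_{n=1}^\infty |a_n| n^{-\sigma} = \sum_{\alpha} |c_\alpha(P)| \frac{1}{(\mathfrak{p}^\alpha)^\sigma} = \infty,
\end{align*}
using that $n \ge 1$ and $\sigma - \delta_m > 0$, together with the identification $n = \primos^\alpha$ and $a_n = c_\alpha(P)$. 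Since $A_N(D, \delta_m) = \sum_{n=1}^N |a_n| n^{-\delta_m}$ is the $N$-th partial sum of this divergent series of nonnegative terms, it tends to $+\infty$.

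The main (and only real) obstacle here is the arithmetic of the exponents: one must be careful that applying Lemma~\ref{lem-spac} at level $M$ produces a divergence at an abscissa that sits strictly above $\delta_m$, and this is precisely what the hypothesis $M > m$ buys us via the monotonicity of $t \mapsto \frac{t-1}{2t}$. Everything else is bookkeeping through the (isometric, coefficient-preserving) Bohr transform and the trivial estimate $n^{\sigma - \delta_m} \ge 1$. One should also note that the choice of $\varepsilon$ only needs to be small enough that $\sigma > \delta_m$; no uniformity in $\varepsilon$ is needed for this lemma, unlike in the proof of Theorem~\ref{thm-radius-spac}.
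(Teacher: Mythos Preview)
Your proof is correct and follows essentially the same route as the paper's: apply Lemma~\ref{lem-spac} at degree $M$, pass through the Bohr transform to land in $\mathscr{H}_\infty^M \cap \mathscr{A}_\Theta(\mathbb{C}_0)$, and then use the strict inequality $\delta_M>\delta_m$ (from $M>m$) to pick an exponent $\sigma>\delta_m$ at which the coefficient series still diverges. The paper phrases the last step as ``$\lim_N A_N(D,\delta_M-\eta)=\infty$ for all $\eta>0$, and $\delta_M>\delta_m$'', which is exactly your comparison $n^{-\delta_m}\ge n^{-\sigma}$ written slightly differently.
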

\begin{proof}
	From Lemma \ref{lem-spac} there is $P\in\mathscr{P}_{M}(c_{0})$ such that, for every $\varepsilon>0$,
	\begin{align*}
		\sum_{\alpha} |c_\alpha(P)| \frac{1}{(\mathfrak{p}^\alpha)^{\frac{1}{(\frac{2M}{M-1} + \varepsilon)(1 + \varepsilon)}}} = \infty.
	\end{align*}
	Moreover, $c_\alpha(P) = 0$ if $supp(\alpha) \not\subset \Theta$. Take $D_1=\mathfrak B(P)$. Since $D_{1}(s)=\sum_{n = 1}^\infty a_{n}n^{-s}\in\mathscr{H}_{\infty}^{M}$ with $a_n=c_{\alpha}(P)$ for $n=\mathfrak{p}^{\alpha}$, it is clear that  $a_{n}=0$ if $\mathfrak{p}_{i} \vert n$ for some $i \in \Theta$. In particular, $D_1 \in \mathscr{A}_\Theta(\mathbb{C}_0)$. Again, from Bohr's transform, we get
	\begin{eqnarray*}
		\sum_{n = 1}^\infty \frac{\vert a_{n}\vert}{n^{\frac{1}{(\frac{2M}{M-1} + \varepsilon)(1 + \varepsilon)}}}=\sum_{n = 1}^\infty \vert a_{n}\vert\frac{1}{(\mathfrak{p}^\alpha)^{\frac{1}{(\frac{2M}{M-1} + \varepsilon)(1 + \varepsilon)}}}=\sum_{\alpha}\vert c_{\alpha}(P)\vert\frac{1}{(\mathfrak{p}^\alpha)^{\frac{1}{(\frac{2M}{M-1} + \varepsilon)(1 + \varepsilon)}}}=\infty
	\end{eqnarray*}
	for every $\varepsilon > 0$. This shows that $\lim\limits_{N \to \infty} A_N(D_1, \delta_M - \eta) = \infty$ for every $\eta > 0$. But since $\delta_M > \delta_m$, we must have $\lim\limits_{N \to \infty} A_N(D_1, \delta_m) = \infty$.
\end{proof}

{	
	
	Now we prove Theorem \ref{thm-absc-spaceable-H2}.
	
\begin{proof}[\underline{Proof of Theorem \ref{thm-absc-spaceable-H2}}]
	Let $\{\Theta_{k,m} : k, m \in \mathbb N\}$ be a family of pairwise disjoint subsets of $\mathbb{N}$ so that each $\Theta_{k,m}$ contains an infinite arithmetic progression. Normalizing in $\mathcal H^2$ the Dirichlet series given by Lemma \ref{Lemma-A}, we can take for each $m \geq 3$ and $k\in \mathbb N$ some $$D_{k,m} \in \mathscr{H}_\infty^{m} \cap \mathscr{A}_{\Theta_{k,m}}(\mathbb{C}_{0})\subset \mathscr{H}_{2}$$ such that $$\|D_{k,m}\|_2 = 1 \ \ \mbox{and} \ \ \sigma_{a}(D_{k,m}) \geq \dfrac{m-1}{2m}.$$
We define the mapping
	\begin{align*}
		T : (\lambda_k)_{k} \in \ell_2 \mapsto \sum_{k=1}^\infty \lambda_k \sum_{m=3}^{\infty}\dfrac{D_{k,m}}{2^{\frac{m-2}{2}}} \in \mathscr{H}_{2}.
	\end{align*}
	We claim that $T$ is a well-defined linear isometry. Indeed, the set $\{D_{k,m} : k\in \mathbb N, m \geq 3\}$ is orthonormal in $\mathscr{H}_2$ (recall that $\Theta_{k,m}s$ are pairwise disjoint) and, then:
	\begin{align*}
		\Big\Vert\sum_{k=1}^\infty \lambda_k \sum_{m=3}^{\infty}\dfrac{D_{k,m}}{2^{\frac{m-2}{2}}}\Big\Vert_{\mathscr{H}_{2}}^2& = \sum_{k=1}^{\infty}\vert \lambda_{k}\vert^{2}\sum_{m=3}^{\infty}\dfrac{1}{2^{m-2}}\Vert D_{k,m}\Vert_{\mathscr{H}_{2}}^{2} = \Vert(\lambda_{k})_{k}\Vert_{\ell_{2}}^2.
	\end{align*}
	On the other hand, for $(\lambda_{k})_{k}\in \ell_{2}\setminus\{0\}$ there is $k_{0} \in \mathbb N$ such that $\lambda_{k_{0}}\neq 0$ and hence
	$$\sigma_{a}\left(T((\lambda_k)_k)\right)\geq \sigma_{a}(D_{k_{0},m})=\dfrac{m-1}{2m} \xrightarrow{m \to \infty} \dfrac{1}{2},$$
	which means that $T(\ell_2) \setminus \{0\} \subset \mathscr{M}$ and completes the proof.
\end{proof}
}

	\section{Proof of Theorem \ref{thm-Gdelta}} \label{Sec.alg-of-M}

From now on, for every $\lambda = (\lambda_1, \ldots, \lambda_k) \in \mathbb{C}^k$ and $D \in \mathscr{H}_\infty$ we set $$D_\lambda := \lambda_1 D + \lambda_2 D^2 + \cdots + \lambda_k D^k.$$ Also, for every $\Theta \subset \mathbb{N}$ and every natural numbers $j,k,\ell$ and $m \geq 2$, we set $\delta_m := \dfrac{m-1}{2m}$ and define
\begin{align*}
	\mathscr{D}_\Theta(j, k, \ell, m):= \{D \in \mathscr{A}_\Theta (\mathbb {C}_0) :  &\textrm{ for every } \lambda=(\lambda_{1}, \ldots, \lambda_k) \in \mathbb{C}^k
	\textrm { satisfying } \\ &\|\lambda\|_\infty \leq j \ \ \mbox{and} \ \ |\lambda_k| \geq j^{-1}, \textrm { there is } N \in \mathbb {N} \\ &\textrm{ such that } A_N(D_\lambda, \delta_m)> \ell \}.
\end{align*}
These sets are inspired in those built in the proof of \cite[Theorem 5]{BayaQuar}.	The technical Lemmas \ref{lem-alg02} and \ref{lem-alg01}, which we state and prove in Section~\ref{sec-techlem}, show that the sets $\mathscr{D}_\Theta(j, k, \ell, m)$ are open and dense in $ \mathscr{A}_\Theta(\mathbb{C}_0)$. With these facts, we can prove a result which will be useful for proving Theorem \ref{thm-Gdelta}.

\begin{proposition} \label{pps1}
	For any infinite subset $\Theta \subset \mathbb N$ containing an infinite arithmetic progression, there exists a dense $G_\delta$ subset of $\mathscr{A}_\Theta(\mathbb{C}_0)$
	such that any of its elements generates a free algebra contained in $(\mathscr{M}\cap\mathscr{A}_\Theta(\mathbb{C}_0)) \cup \{0\}$.
\end{proposition}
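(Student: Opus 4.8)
The plan is a straightforward Baire category argument resting on the technical lemmas of Section~\ref{sec-techlem}. First I would introduce
$$\mathcal{G}_\Theta := \bigcap_{\substack{j,k,\ell \in \mathbb{N}\\ m \geq 2}} \mathscr{D}_\Theta(j,k,\ell,m).$$
Since $\mathscr{A}_\Theta(\mathbb{C}_0)$ is a closed subspace of the Banach space $\mathscr{A}(\mathbb{C}_0)$, it is itself a complete metric space and hence a Baire space. By Lemmas~\ref{lem-alg02} and~\ref{lem-alg01} each $\mathscr{D}_\Theta(j,k,\ell,m)$ is open and dense in $\mathscr{A}_\Theta(\mathbb{C}_0)$ (this is where the hypothesis that $\Theta$ contains an infinite arithmetic progression is used, through Lemma~\ref{Lemma-A}), so $\mathcal{G}_\Theta$ is a dense $G_\delta$ subset of $\mathscr{A}_\Theta(\mathbb{C}_0)$. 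It then remains to check that every $D \in \mathcal{G}_\Theta$ generates a free algebra contained in $(\mathscr{M}\cap\mathscr{A}_\Theta(\mathbb{C}_0))\cup\{0\}$.

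So I would fix $D \in \mathcal{G}_\Theta$ and observe that the (non-unital) subalgebra of $\mathscr{A}(\mathbb{C}_0)$ generated by $D$ is exactly $\{D_\lambda : k\in\mathbb{N},\ \lambda\in\mathbb{C}^k\}$; since $\mathscr{A}_\Theta(\mathbb{C}_0)$ is a subalgebra of $\mathscr{A}(\mathbb{C}_0)$ containing $D$, each $D_\lambda$ lies in $\mathscr{A}_\Theta(\mathbb{C}_0)\subset\mathscr{H}_\infty$, whence $\sigma_a(D_\lambda)\le 1/2$ by \eqref{Svalue}. For the reverse inequality, take $\lambda=(\lambda_1,\dots,\lambda_k)\neq 0$ and, deleting trailing zeros, assume $\lambda_k\neq 0$. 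Choose $j\in\mathbb{N}$ with $j\ge \|\lambda\|_\infty$ and $j\ge |\lambda_k|^{-1}$. Then for every $\ell\in\mathbb{N}$ and every $m\ge 2$ the membership $D\in\mathscr{D}_\Theta(j,k,\ell,m)$ produces some $N$ with $A_N(D_\lambda,\delta_m)>\ell$; as $\ell$ is arbitrary, $\lim_{N\to\infty}A_N(D_\lambda,\delta_m)=+\infty$, and by \eqref{eq-ANimpliessigma} this gives $\sigma_a(D_\lambda)\ge\delta_m=(m-1)/(2m)$. Letting $m\to\infty$ yields $\sigma_a(D_\lambda)\ge 1/2$, so $\sigma_a(D_\lambda)=1/2$ and $D_\lambda\in\mathscr{M}\cap\mathscr{A}_\Theta(\mathbb{C}_0)$.

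Finally, the same computation gives freeness: a nontrivial relation $\sum_{j=1}^k\lambda_j D^j=0$ would mean (after deleting trailing zeros) $D_\lambda=0$ for some $\lambda\neq 0$, contradicting $\sigma_a(D_\lambda)=1/2$ since the zero Dirichlet series has abscissa $-\infty$. Hence $D,D^2,D^3,\dots$ are linearly independent over $\mathbb{C}$, so the algebra they span is free on the single generator $D$, and by the previous paragraph it sits inside $(\mathscr{M}\cap\mathscr{A}_\Theta(\mathbb{C}_0))\cup\{0\}$, which proves the proposition. I expect the genuine difficulty to lie entirely in the technical lemmas—namely, establishing that the sets $\mathscr{D}_\Theta(j,k,\ell,m)$ are dense (openness being comparatively soft); once those are in hand the argument above is pure bookkeeping, the one point requiring care being that the degree parameter $k$ must be among the intersected indices and that the constraint $|\lambda_k|\ge j^{-1}$ is exactly what lets a single $j$ handle a given $\lambda$.
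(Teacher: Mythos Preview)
Your proposal is correct and follows essentially the same route as the paper: define the countable intersection of the sets $\mathscr{D}_\Theta(j,k,\ell,m)$, invoke Baire via Lemmas~\ref{lem-alg02} and~\ref{lem-alg01}, and then for a fixed $D$ in the intersection and a nonzero $\lambda$ pick $j$ with $\|\lambda\|_\infty\le j$ and $|\lambda_k|\ge j^{-1}$ to force $\sigma_a(D_\lambda)\ge\delta_m$ for all $m$. The only cosmetic difference is that you make the freeness conclusion explicit (the paper leaves it implicit in the fact that $\sigma_a(D_\lambda)=1/2$ forces $D_\lambda\neq 0$), and you justify $\sigma_a(D_\lambda)\le 1/2$ via \eqref{Svalue}, which the paper takes for granted.
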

\begin{proof}
	It follows from Baire category theorem and Lemmas \ref{lem-alg02} and \ref{lem-alg01} that the set
	$$\mathscr{D}_{\Theta}:=\displaystyle\bigcap_{j\geq 1}\bigcap_{k\geq 1}\bigcap_{\ell\geq 1}\bigcap_{m\geq 2}\mathscr{D}_{\Theta}(j,k,\ell,m)$$
	is a dense $G_\delta$ subset of $\mathscr{A}_{\Theta}(\mathbb{C}_0)$. Let us first see that $\mathscr{D}_{\Theta} \subset \mathscr{M}$. Note that if $D \in \mathscr{D}_{\Theta}$ then $$\displaystyle D \in \bigcap_{\ell \geq 1} \bigcap_{m \geq 2} \mathscr{D}_{\Theta}(1,1,\ell,m),$$ which implies $\lim\limits_{N \to \infty} A_N(D_{\lambda}, \delta_m) = \infty$ for every $m \geq 2$ and $\lambda=1$. Since in this case  $D_{\lambda}=D$, we conclude that $\sigma_a(D) \geq \delta_m \xrightarrow{m \to \infty} \dfrac{1}{2}$ and therefore $D \in \mathscr{M}$.
	
	To complete the proof, take $D \in \mathscr{D}_{\Theta}$ and let $\mathcal{A}(\{D\})$ be the subalgebra of $\mathscr{A}_\Theta(\mathbb{C}_0)$ generated by $D$. We know that $\mathscr{A}_\Theta(\mathbb{C}_0)$ is algebraically closed, so we have to see that  $\mathcal{A}(\{D\}) \subset \mathscr{M} \cup \{0\}$. Note that each $\widetilde{D} \in \mathcal{A}(\{D\}) \setminus \{0\}$ may be written in the form
	$$\widetilde{D} = D_{\lambda} = \sum_{i=1}^{k_0}\lambda_{i}D^{i} \ \ \mbox{with} \ \ k_0 \in \mathbb{N}, \ \  \lambda=(\lambda_1, \ldots, \lambda_{k_0}) \in \mathbb{C}^{k_0} \mbox{ and } \lambda_{k_0} \not= 0.$$
	Choose $j_0 \in \mathbb{N}$ such that $\Vert\lambda\Vert_\infty \leq j_0$ and $\vert \lambda_{k_0}\vert \geq j_0^{-1}$. Since $\displaystyle D \in \bigcap_{\ell \geq 1}\bigcap_{m \geq 2} \mathscr{D}_{\Theta}(j_0,k_0,\ell,m)$, for every $\ell, m \in \mathbb{N}$ there is $N_{\ell,m} \in \mathbb{N}$ such that $A_{N_{\ell,m}}(D_{\lambda},\delta_m) > \ell$. This gives $\sigma_a(D_{\lambda}) \geq \delta_m$ for each $m \geq 2$, and hence $\sigma_a(\widetilde{D})=\sigma_{a}(D_{\lambda})= \dfrac{1}{2}$. This finishes the proof.
\end{proof}

Recall that if $\mathcal{A}$ is a complex commutative algebra, a subset $X = \{x_k : k \in \mathbb N\}$ of $\mathcal{A}$ is  \emph{algebraically independent} whenever the following holds: for every $N \in \mathbb N$, if a polynomial   $Q \in \mathbb{C}[z_1, \ldots, z_N]$  satisfies that $Q(x_{1}, \ldots, x_{N})=0$, then $Q$ must be $0$.

\begin{lemma} \label{lem-alg.ind.}
	Let $\{\Theta_k\}_{k \in \mathbb N}$ be a family of pairwise disjoint infinite subsets of $\mathbb N$. We choose for each $k \in \mathbb N$ a non-constant Dirichlet series $D_k \in \mathscr{A}_{\Theta_k}(\mathbb{C}_0)$. Then $\{D_k\}_{k \in \mathbb{N}}$ is an algebraically independent subset of $\mathscr{A}(\mathbb{C}_0)$.
\end{lemma}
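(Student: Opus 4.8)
The plan is to transport the problem, via the Bohr transform, to the power--series side, where $D_k$ becomes a holomorphic function $f_k$ on $B_{c_0}$ that involves only the variables indexed by $\Theta_k$. Since the $\Theta_k$ are pairwise disjoint, the $f_k$ will depend on pairwise disjoint blocks of variables, and then any algebraic relation can be killed by a one--variable substitution together with the open mapping theorem.

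First I would set up the translation. By \cite[Theorem 3.8]{DefGarMaeSev19} the Bohr transform $\mathfrak{B}$ is an algebra isomorphism between $H_\infty(B_{c_0})$ and $\mathscr{H}_\infty$, and under $\mathfrak{B}^{-1}$ the condition defining $\mathscr{A}_{\Theta_k}(\mathbb{C}_0)$ says exactly that $f_k := \mathfrak{B}^{-1}(D_k)$ has monomial coefficients $c_\alpha(f_k)$ vanishing whenever $supp(\alpha)\not\subset\Theta_k$. Writing $f_k=\sum_{m\ge 0}P_m^{(k)}$ for its decomposition into homogeneous polynomials $P_m^{(k)}\in\mathscr{P}_m(c_0)$, the hypothesis that $D_k$ is non-constant means $P_{m_k}^{(k)}\ne 0$ for some $m_k\ge 1$. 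Since every nonzero monomial of $P_{m_k}^{(k)}$ is supported in $\Theta_k$, by zeroing out the coordinates outside $\Theta_k$ and then truncating (using continuity of $P_{m_k}^{(k)}$ and density of $c_{00}$) I would produce a finitely supported vector $w^{(k)}\in c_{00}$ with $supp(w^{(k)})\subset\Theta_k$ and $P_{m_k}^{(k)}(w^{(k)})\ne 0$.

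Next comes the substitution. Let $Q\in\mathbb{C}[z_1,\dots,z_N]$ with $Q(D_1,\dots,D_N)=0$; applying the algebra isomorphism $\mathfrak{B}^{-1}$ gives $Q(f_1,\dots,f_N)=0$ on $B_{c_0}$. For $t=(t_1,\dots,t_N)\in\mathbb{C}^N$ with every $\abs{t_j}$ small, put $z(t):=\sum_{j=1}^N t_j w^{(j)}\in B_{c_{00}}$. Because $f_k$ involves only coordinates in $\Theta_k$, on which $z(t)$ agrees with $t_k w^{(k)}$ (the other $w^{(j)}$ being supported off $\Theta_k$), one gets $f_k(z(t))=f_k(t_k w^{(k)})=:g_k(t_k)$, where $g_k(\zeta)=\sum_{m\ge 0}\zeta^m P_m^{(k)}(w^{(k)})$ is holomorphic near $0$ with $\zeta^{m_k}$-coefficient $P_{m_k}^{(k)}(w^{(k)})\ne 0$ and $m_k\ge 1$; hence each $g_k$ is non-constant. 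Evaluating $Q(f_1,\dots,f_N)=0$ at $z(t)$ yields $Q(g_1(t_1),\dots,g_N(t_N))=0$ for all small $t$.

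Finally I would conclude by the open mapping theorem: each non-constant $g_k$ sends a small disc about $0$ onto a nonempty open set $U_k\subset\mathbb{C}$, so $Q$ vanishes on the nonempty open set $U_1\times\cdots\times U_N\subset\mathbb{C}^N$, whence $Q\equiv 0$ and $\{D_k\}_{k\in\mathbb{N}}$ is algebraically independent. I expect the main (if modest) obstacle to be the first step: making rigorous the statement that ``$f_k$ depends only on the coordinates in $\Theta_k$'' and the extraction of the finitely supported $w^{(k)}$. Once one commits to the monomial series representation on $B_{c_{00}}$ and its grouping into homogeneous parts this is routine, and the substitution and open mapping steps are then immediate.
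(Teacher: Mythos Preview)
Your proof is correct and follows essentially the same route as the paper's: transport via the Bohr transform to functions $f_k\in H_\infty(B_{c_0})$ supported on the disjoint coordinate blocks $\Theta_k$, substitute so that the variables separate, and invoke the open mapping theorem to see that $Q$ vanishes on a nonempty open subset of $\mathbb{C}^N$. The only cosmetic difference is in the substitution step: the paper parametrizes each $f_\ell$ by a finite polydisc via a projection $\pi_\ell:\mathbb{D}^L\to B_{c_0}$ and applies the (several-variable) open mapping theorem to $f_\ell\circ\pi_\ell$, whereas you pick a single direction $w^{(k)}$ and reduce to the one-variable functions $g_k(\zeta)=f_k(\zeta w^{(k)})$; both choices lead to the same conclusion with equal ease.
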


\begin{proof}
	From \cite[Theorem 2.5]{AroBayGauMaeNes}, the restriction of Bohr's transform $ \mathfrak{B}|_{A_u(B_{c_0})} : A_u(B_{c_0}) \to \mathscr{A}(\mathbb{C}_0)$ is an isometric algebra isomorphism. We set $f_k=\mathfrak{B}^{-1}(D_k)$ and define
	$$X := \{f_k: k\in\mathbb N\}.$$
	Hence, it suffices to show that $X$ is an algebraically independent subset of $A_u(B_{c_0})$. By means of Bohr transform, if $D_{k}(s)=\sum_{n = 1}^\infty a_{k,n}n^{-s}$, then $$f_{k}(z)=\sum_{\alpha\in\mathbb{N}_{0}^{(\mathbb{N})}}c_{\alpha}(f_{k})z^{\alpha} \ \ \mbox{for every} \ \ z \in B_{c_{00}},$$
	where $c_{\alpha}(f_k)=a_{k,n}$ with $n=\mathfrak{p}^{\alpha}$. Since $a_{k,n} = 0$ if $\mathfrak{p}_{i} \vert n$  for some $i \not\in \Theta_{k}$, we have
	\begin{align} \label{aux-def-alg-fk}
		f_{k}(z)=\sum_{\underset{supp(\alpha)\subset\Theta_{k}}{\alpha\in\mathbb{N}_{0}^{(\mathbb{N})}}}c_{\alpha}(f_{k})z^{\alpha} \ \ \mbox{for every} \ \ z \in B_{c_{00}}.
	\end{align}
	
	Take $N\in \N$ and take a polynomial $Q \in \mathbb{C}[z_1, \ldots, z_N]$ with $Q(f_{1}, \ldots, f_{N}) = 0$. Since each $f_{\ell}$ is non-constant, we can take  $\alpha^\ell \in \mathbb{N}_0^{(\mathbb{N})} \setminus \{0\}$ such that $supp(\alpha^\ell) \subset \Theta_{\ell}$ and $c_{\alpha^\ell}(f_{\ell}) \not= 0$. We set $L := \max \left(\bigcup_{\ell = 1}^N A_\ell\right)$ with $A_\ell := supp(\alpha^\ell)$. And, for each $\ell = 1, \ldots, N$, we define
	\begin{align} \label{def-pi_ell}
		\pi_{\ell}:(z_{1},\ldots,z_{L})\in\mathbb{D}^{L}\mapsto (\chi_{A_{\ell}}(j)z_{j})_{j=1}^\infty \in B_{c_0}.
	\end{align}
	
	Since $\alpha^\ell \not= 0$ and $c_{\alpha^\ell}(f_{\ell}) \not= 0$, we see from (\ref{aux-def-alg-fk}) that $f_{\ell} \circ \pi_\ell : \mathbb{D}^L \to \mathbb C$ is a non-constant holomorphic function for each $\ell = 1, \ldots, N$. Hence, by the open mapping theorem, there is $\delta > 0$ such that $B_\delta((f_{\ell} \circ \pi_\ell)(0)) \subset Im(f_{\ell} \circ \pi_\ell)$ for every $\ell = 1, \ldots, N$.
	
	As a consequence, fixed  $\lambda_\ell \in B_\delta((f_{\ell} \circ \pi_\ell)(0))$ for each $\ell=1,\dots, N$,  there exists $w_\ell = (w_{1,\ell}, \ldots, w_{L,\ell}) \in \mathbb{D}^L$ such that $f_{\ell}(\pi_\ell(w_\ell)) = \lambda_\ell$. We set
	$$v_0 := \pi_1(w_1) + \cdots + \pi_N(w_N).$$
	Since each $\pi_\ell(w_\ell)$ is supported in $\Theta_{\ell}$ and these sets are pairwise disjoint, it follows from (\ref{aux-def-alg-fk}) that
	$$f_{\ell}(v_0) = f_{\ell}(\pi_1(w_1) + \cdots + \pi_N(w_N)) = f_{\ell}(\pi_\ell(w_\ell)) = \lambda_\ell$$
	for each $\ell = 1, \ldots, N$. Thus,
	\begin{align*}
		Q(\lambda_1, \ldots, \lambda_N) = Q(f_1(v_0),\dots, f_N(v_0))=0.
	\end{align*}
	Since this holds for any $(\lambda_1,\dots,\lambda_N)$ in the open set $B_\delta(f_{1}(0)) \times \cdots \times B_\delta(f_{N}(0))$, we conclude that $Q = 0$.
\end{proof}

\begin{lemma} \label{lem-comb-1alg}
	Let $D\in\mathscr{A}_{\Theta}(\mathbb{C}_{0}) \setminus \{0\}$ be such that $\mathcal{A}(\{D\})\subset\mathscr{M}\cup\{0\}$. Then,  \begin{equation}\label{eq-lemma}
		\lambda_0 D_0 + \lambda_{1}D_{1}D+\lambda_{2}D_{2}D^{2}+\cdots+\lambda_{N}D_{N}D^{N}\in\mathscr{M}
	\end{equation}
	for all  $D_0, D_{1},\ldots,D_{N} \in \mathscr{A}_{\mathbb{N} \setminus \Theta}(\mathbb{C}_{0})$ and  $\lambda_0, \lambda_{1},\ldots,\lambda_{N}\in\mathbb{C}$ with $\lambda_{N}  D_N \not= 0$.
\end{lemma}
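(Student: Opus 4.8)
The plan is to exploit the complementary multiplicative structure of $D$ and the $D_i$'s and to reduce the assertion to the hypothesis $\mathcal{A}(\{D\})\subseteq\mathscr{M}\cup\{0\}$. Write $F:=\lambda_0 D_0+\lambda_1 D_1 D+\cdots+\lambda_N D_N D^N$, and for $n\in\mathbb{N}$ factor $n=n_\Theta\, n_{\Theta^c}$, where $n_\Theta$ is the largest divisor of $n$ all of whose prime factors are of the form $\mathfrak{p}_i$ with $i\in\Theta$ and $n_{\Theta^c}$ the largest divisor with all prime factors of the form $\mathfrak{p}_i$ with $i\notin\Theta$; these two factors are coprime and their product is $n$. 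For a Dirichlet series $E=\sum_n e_n n^{-s}$ we write $\widehat{E}(n):=e_n$. Recall that $D\in\mathscr{A}_\Theta(\mathbb{C}_0)$ means $\widehat{D}(n)=0$ unless $n_\Theta=n$, and $D_j\in\mathscr{A}_{\mathbb{N}\setminus\Theta}(\mathbb{C}_0)$ means $\widehat{D_j}(n)=0$ unless $n_{\Theta^c}=n$.

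First, since $\mathscr{A}_\Theta(\mathbb{C}_0)$ and $\mathscr{A}_{\mathbb{N}\setminus\Theta}(\mathbb{C}_0)$ are subalgebras of $\mathscr{A}(\mathbb{C}_0)$, we have $F\in\mathscr{A}(\mathbb{C}_0)\subseteq\mathscr{H}_\infty$, so $\sigma_a(F)\leq 1/2$ by \eqref{Svalue}; hence it suffices to prove $\sigma_a(F)\geq\delta_m$ for every $m\geq 2$, and by \eqref{eq-ANimpliessigma} this follows once we show $\lim_{K\to\infty}A_K(F,\delta_m)=+\infty$ for each such $m$. The structural observation is that, since $D\in\mathscr{A}_\Theta(\mathbb{C}_0)$ and this algebra is closed under Dirichlet multiplication, $D^j\in\mathscr{A}_\Theta(\mathbb{C}_0)$; thus in the Dirichlet product $\widehat{(D_j D^j)}(n)=\sum_{ab=n}\widehat{D_j}(a)\widehat{D^j}(b)$ the only possibly nonzero term is the one with $a=n_{\Theta^c}$ and $b=n_\Theta$ (because $\widehat{D_j}$ is supported on $(\mathbb{N}\setminus\Theta)$-smooth integers, $\widehat{D^j}$ on $\Theta$-smooth ones, and the factorization of $n$ into a $(\mathbb{N}\setminus\Theta)$-smooth factor times a $\Theta$-smooth factor is unique). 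Since $\widehat{D_0}$ is likewise supported on $(\mathbb{N}\setminus\Theta)$-smooth integers, for every $n$ with $n_\Theta>1$ we obtain
$$\widehat{F}(n)=\sum_{j=1}^N \lambda_j\,\widehat{D_j}(n_{\Theta^c})\,\widehat{D^j}(n_\Theta).$$

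Next, pick $d\in\mathbb{N}$ with $\widehat{D_N}(d)\neq 0$; this is possible because $\lambda_N D_N\neq 0$ forces $D_N\neq 0$, and any such $d$ satisfies $d_\Theta=1$. Put $G_d:=\sum_{j=1}^N \lambda_j\,\widehat{D_j}(d)\,D^j\in\mathcal{A}(\{D\})$, whose coefficient of $D^N$ equals $\lambda_N\widehat{D_N}(d)\neq 0$. Since no nonzero constant lies in $\mathscr{M}$, the hypothesis $\mathcal{A}(\{D\})\subseteq\mathscr{M}\cup\{0\}$ forces $D$ to be non-constant; consequently, if one had $\sum_{j=1}^N c_j D^j=0$ with $c_N\neq 0$, then $D$ would be a root of the non-constant polynomial $\sum_{j=1}^N c_j x^j$ and hence take values in a finite set, which is impossible for a non-constant continuous function on the connected set $\mathbb{C}_0$. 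Therefore $G_d\neq 0$, so $G_d\in\mathcal{A}(\{D\})\setminus\{0\}\subseteq\mathscr{M}$, giving $\sigma_a(G_d)=1/2>\delta_m$ and hence $\lim_{K\to\infty}A_K(G_d,\delta_m)=+\infty$ for every $m\geq 2$.

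Finally, I would restrict the sum defining $A_K(F,\delta_m)$ to the integers of the form $n=d\,r$ with $r>1$ a $\Theta$-smooth integer; for these $n_{\Theta^c}=d$ and $n_\Theta=r>1$, so the displayed formula gives $\widehat{F}(dr)=\widehat{G_d}(r)$. Using $(dr)^{-\delta_m}=d^{-\delta_m}r^{-\delta_m}$ and the fact that $\widehat{G_d}$ is supported on $\Theta$-smooth integers, this yields
$$A_K(F,\delta_m)\;\geq\; d^{-\delta_m}\sum_{1<r\leq K/d}|\widehat{G_d}(r)|\,r^{-\delta_m}\;=\;d^{-\delta_m}\bigl(A_{\lfloor K/d\rfloor}(G_d,\delta_m)-|\widehat{G_d}(1)|\bigr),$$
which tends to $+\infty$ as $K\to\infty$. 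Hence $\sigma_a(F)\geq\delta_m$ for all $m\geq 2$, so $\sigma_a(F)=1/2$ and $F\in\mathscr{M}$. The one point requiring care is the multiplicative bookkeeping of the second paragraph — the uniqueness of the factorization $n=n_{\Theta^c}n_\Theta$ and the resulting clean identity for $\widehat{F}(n)$ when $n_\Theta>1$; once that is in place, the choice of $d$ converts $F$ into (a tail of) a member of $\mathcal{A}(\{D\})$ and the abscissa estimate is immediate from \eqref{eq-ANimpliessigma}.
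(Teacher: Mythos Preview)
Your proof is correct and follows essentially the same approach as the paper. Both arguments pick an index $d$ (the paper calls it $n_0$) in the support of $D_N$, then use the unique factorization $n=n_\Theta\,n_{\Theta^c}$ to show that the coefficients of $F$ at integers of the form $d\cdot(\Theta\text{-smooth})$ coincide with those of a nonzero element of $\mathcal{A}(\{D\})$, whence $\sigma_a(F)=\tfrac{1}{2}$. Your bookkeeping via the explicit identity $\widehat{F}(n)=\sum_{j=1}^N \lambda_j\,\widehat{D_j}(n_{\Theta^c})\,\widehat{D^j}(n_\Theta)$ for $n_\Theta>1$ is a little more direct than the paper's decomposition into partial sums $S_{n_0-1}$ and tails $T_{n_0}$, and you do not use the minimality of $d$ (which the paper states but does not actually need); otherwise the two proofs are the same.
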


\begin{proof}
	For each $k \in \mathbb N$ we define $pdiv(k) := \{j \in \mathbb N : \mathfrak{p}_j | k\}$. Also, we set
	\begin{align*}D_m(s) := \sum_{n = 1}^\infty a_{m,n}n^{-s}\, ; \quad & S_M(D_m,s) := \sum_{n = 1}^{M}a_{m,n}n^{-s} \, ;\\ & T_M(D_m,s) := D_m(s) - S_M(D_m,s)
	\end{align*}
	for every $m = 0, 1, \ldots, N$ and $M\in\mathbb{N}$. Also, since $D_N\ne 0$, we can define
	$$n_0=\min\{ m \in \mathbb N : a_{m,N} \ne 0\}.$$  From the fact that $D_{N} \in \mathscr{A}_{\mathbb{N} \setminus \Theta}(\mathbb{C}_{0})$ it follows that $\mathfrak{p}_j \not |  n_0$ for every $j\in \Theta$.
	
	With this notation, we can rewrite the sum in \eqref{eq-lemma} as
	\begin{align*}
		\sum_{m=0}^N \lambda_m D_m D^m  = \widetilde{D} + {\sum_{m=0}^{N}\lambda_ma_{m,n_{0}}n_{0}^{-s}D^m}.
	\end{align*}
	where $$\widetilde{D} = {\sum_{m=0}^{N-1}\lambda_m S_{n_{0}-1}(D_m, \cdot)D^m+\sum_{m=0}^{N}\lambda_mT_{n_0}(D_m, \cdot)D^m}.$$
	First, note that $\widehat{D}=\sum_{m=0}^{N}\lambda_ma_{m,n_{0}}n_{0}^{-s}D^m$ is nonzero since it is, up to multiplication by $n_0^{-s}$, a nonzero one-variable polynomial applied to a nonconstant holomorphic function. Also, $\widehat{D}$ belongs  to $\mathscr{M}$ by hypothesis (multiplication by	$n_0^{-s}$ is not a problem).
	
	It is clear that all the nonzero terms in $\widehat{D}$ correspond to  $(n_0 \,k)^{-s}$ where $\mathfrak{p}_j | k$ only for $j\in \Theta$.
	On the other hand, since each $D_m$ belongs to $\mathscr{A}_{\mathbb{N} \setminus \Theta}(\mathbb{C}_{0})$, the coefficients of $\widetilde D$ corresponding to such $(n_0 \,k)^{-s}$ are all zero. 	This implies $$\sigma_{a}\left(\sum_{m=0}^N \lambda_m D_m D^m\right) \geq \sigma_{a}\left(\sum_{m=1}^{N}\lambda_m a_{m,n_{0}}n_{0}^{-s}D^m\right)=\frac{1}{2},$$
	which completes the proof.
\end{proof}

\begin{proof}[\underline{Proof of Theorem \ref{thm-Gdelta}}]
	The existence of a dense $G_\delta$ set follows from Proposition \ref{pps1} taking  $\Theta = \mathbb N$. To prove the algebrability part, we take $\{\Theta_k : k \in \mathbb N\}$ an infinite family of disjoint arithmetic progressions. It follows from Proposition \ref{pps1} that for each $k \in \mathbb N$ there is $D_k \in \mathscr{A}_{\Theta_k}(\mathbb{C}_0) \setminus \{0\}$ such that $\mathcal{A}(\{D_k\}) \subset \mathscr{M} \cup \{0\}$. Moreover, by Lemma \ref{lem-alg.ind.}, the set $\{D_k : k \in \mathbb N\}$ is algebraically independent. To complete the proof, it suffices to show that the subalgebra generated by $D_k$'s is contained in $\mathscr{M} \cup \{0\}$.
	
	Let $Q \in \mathbb{C}[z_1, \ldots, z_N]$ be a nonzero polynomial without constant term. Let $M$ be the maximum exponent of the variable $z_N$ in $Q$. We may suppose that $M\ge 1$ (if not, we consider $Q$ as a polynomial in less variables).  We then can write
	\begin{align*}
		Q(D_{1}, \ldots, D_{N}) = \sum_{m=0}^{M} L_m(D_{1}, \ldots, D_{{N-1}}) \ D_{{N}}^m,
	\end{align*}
	for apropriate $L_m \in \mathbb{C}[z_1, \ldots, z_{N-1}]$, $m=1,\dots,M$, with $L_M\ne 0$.
	Note that $L_m(D_{k_1}, \ldots, D_{k_{N-1}}) \in \mathscr{A}_{\mathbb{N}\setminus \Theta_{k_N}}(\mathbb{C}_0)$ for each $m$. Also, since  $D_{1}, \ldots, D_{{N-1}}$  are algebraically independent, we have $L_{M}(D_{1}, \ldots, D_{{N-1}}) \not=0$ and the result follows from Lemma \ref{lem-comb-1alg}.
\end{proof}

\section{The sets $\mathscr{D}_\Theta(j, k, \ell, m)$} \label{sec-techlem}

In this section we prove that the sets $\mathscr{D}_\Theta(j, k, \ell, m)$ are open and dense in $ \mathscr{A}_\Theta(\mathbb{C}_0)$, a fact used in the previous sections. These facts are shown in Lemmas \ref{lem-alg02} and \ref{lem-alg01} below.

\begin{lemma}\label{lem-alg02}
	The set $\mathscr{D}_{\Theta}(j,k,\ell,m)$ is open in $\mathscr{A}_{\Theta}(\mathbb{C}_{0})$.
\end{lemma}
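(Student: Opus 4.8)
The plan is to exploit that the membership condition for $\mathscr{D}_{\Theta}(j,k,\ell,m)$ has the shape ``\emph{for every} $\lambda$ in a \emph{compact} set \emph{there is} $N$ with an open condition on $D$''. Combined with the fact that, for each fixed $N$, the map $(D,\lambda)\mapsto A_N(D_\lambda,\delta_m)$ is jointly continuous, a routine compactness argument produces, around any $D_0\in\mathscr{D}_{\Theta}(j,k,\ell,m)$, an open neighbourhood contained in $\mathscr{D}_{\Theta}(j,k,\ell,m)$.

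Concretely, I would set $K:=\{\lambda\in\mathbb{C}^k:\|\lambda\|_\infty\le j,\ |\lambda_k|\ge j^{-1}\}$, which is compact, and first record the continuity of
\[
\Phi_N:\mathscr{A}_\Theta(\mathbb{C}_0)\times\mathbb{C}^k\to\mathbb{R},\qquad \Phi_N(D,\lambda):=A_N(D_\lambda,\delta_m),
\]
for each fixed $N\in\mathbb{N}$. Writing $D=\sum_n a_n n^{-s}$ and $D_\lambda=\sum_n c_n(D,\lambda)n^{-s}$, one has $c_n(D,\lambda)=\sum_{i=1}^k\lambda_i\,\varphi_n(D^i)$, where $\varphi_n$ denotes the $n$-th Dirichlet coefficient functional. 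Since $\mathscr{H}_\infty$ is a Banach algebra, $D\mapsto D^i$ is continuous, and $\varphi_n$ is continuous on $\mathscr{H}_\infty$ by \cite[Proposition 1.19]{DefGarMaeSev19}; hence $D\mapsto\varphi_n(D^i)$ is continuous, so $(D,\lambda)\mapsto c_n(D,\lambda)$ is continuous, and therefore so is $\Phi_N(D,\lambda)=\sum_{n=1}^N|c_n(D,\lambda)|\,n^{-\delta_m}$, being a finite sum of continuous functions.

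With this in hand, fix $D_0\in\mathscr{D}_{\Theta}(j,k,\ell,m)$. For each $\lambda\in K$ there is $N(\lambda)$ with $\Phi_{N(\lambda)}(D_0,\lambda)>\ell$, so by continuity of $\Phi_{N(\lambda)}$ there are open neighbourhoods $U_\lambda\ni\lambda$ in $\mathbb{C}^k$ and $V_\lambda\ni D_0$ in $\mathscr{A}_\Theta(\mathbb{C}_0)$ with $\Phi_{N(\lambda)}(D,\mu)>\ell$ for all $(D,\mu)\in V_\lambda\times U_\lambda$. Compactness of $K$ yields finitely many $U_{\lambda_1},\dots,U_{\lambda_r}$ covering $K$; set $V:=\bigcap_{i=1}^r V_{\lambda_i}$. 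Then for $D\in V$ and any $\mu\in K$ one picks $i$ with $\mu\in U_{\lambda_i}$, and $(D,\mu)\in V_{\lambda_i}\times U_{\lambda_i}$ gives $A_{N(\lambda_i)}(D_\mu,\delta_m)>\ell$, so $D\in\mathscr{D}_{\Theta}(j,k,\ell,m)$. Hence $V$ is an open neighbourhood of $D_0$ inside $\mathscr{D}_{\Theta}(j,k,\ell,m)$, proving openness.

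The argument is essentially bookkeeping once the right viewpoint is adopted; the only step deserving care — and the one I would flag as the main point — is the joint continuity of $\Phi_N$, i.e.\ justifying that the Dirichlet coefficients of the powers $D^i$ depend continuously on $D$ in the supremum norm, which is exactly where the Banach-algebra structure of $\mathscr{H}_\infty$ and the continuity of the coefficient functionals are used.
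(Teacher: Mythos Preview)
Your argument is correct and rests on the same two ingredients as the paper's proof: the compactness of the parameter set $K=\{\lambda\in\mathbb{C}^k:\|\lambda\|_\infty\le j,\ |\lambda_k|\ge j^{-1}\}$ and the joint continuity of $(D,\lambda)\mapsto A_N(D_\lambda,\delta_m)$. The only cosmetic difference is that the paper shows the complement is sequentially closed (picking witnesses $\lambda_q$ and passing to a convergent subsequence), whereas you run the equivalent open-cover argument directly; this is the same approach in different packaging.
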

\begin{proof}
	Let $j,k,\ell\in \mathbb{N}$ and $m \geq 2$ be fixed, and let $(D_q)_{q=1}^\infty$ be a sequence in $\mathscr{A}_{\Theta}(\mathbb{C}_0) \setminus \mathscr{D}_{\Theta}(j,k,\ell,m)$ converging to $D_{0}$ in $\mathscr{A}_{\Theta}(\mathbb{C}_0)$. Since $D_q \not\in \mathscr{D}_{\Theta}(j,k,\ell,m)$ for each $q$, there exists a sequence $(\lambda_q)_q = (\lambda_{q,1}, \ldots, \lambda_{q,k})_q \in \mathbb{C}^k$ satisfying
	\begin{eqnarray} \label{eq1}
		\Vert\lambda_{q} \Vert_{\infty}\leq j; \ \ \ \left\vert\lambda_{q,k}\right\vert \geq j^{-1}
		\ \ \mbox{and} \ \ \sup_{N \in \mathbb{N}}A_N((D_q)_{\lambda_q},\delta_m) \leq \ell.
	\end{eqnarray}
	By passing to a subsequence if necessary, we may suppose that
	$$\lambda_q \to \lambda = (\lambda_1, \ldots, \lambda_k) \in \mathbb{C}^k  \ \ \mbox{as} \ \ q \to \infty.$$
	We clearly have
	\begin{eqnarray} \label{eq2} \Vert\lambda\Vert_{\infty}\leq j \ \ \mbox{and} \ \  \left\vert\lambda_{k}\right\vert \geq j^{-1}.
	\end{eqnarray}
	Note that the mappings
	\begin{align*}
		(D,\gamma) \in \mathscr{A}_{\Theta}(\mathbb{C}_0) \times \mathbb{C}^k  & \mapsto D_{\gamma} \in \mathscr{A}_{\Theta}(\mathbb{C}_0)  \ \quad \mbox{and}  \\
		D \in \mathscr{A}_{\Theta}(\mathbb{C}_0) & \mapsto A_N(D,\delta_m) \in \mathbb{C}
	\end{align*}
	are continuous, so it follows from (\ref{eq1}) that
	\begin{eqnarray} \label{eq3}
		\sup_{N \in \mathbb{N}} A_N((D_{0})_\lambda,\delta_m) \leq \ell.
	\end{eqnarray}
	From (\ref{eq2}) and  (\ref{eq3}) we conclude that  $D_{0} \in \mathscr{A}_{\Theta}(\mathbb{C}_0) \setminus \mathscr{D}_{\Theta}(j,k,\ell,m)$ and so $\mathscr{D}_{\Theta}(j,k,\ell,m)$ is an open subset in $\mathscr{A}_{\Theta}(\mathbb{C}_0)$.
\end{proof}

For each Dirichlet series $D = \sum_{m=0}^\infty \sum_{\Omega(n) = m} a_n n^{-s}$ we define
\begin{align*}
	\widetilde{\Omega}(D) :=  & \{m \in \mathbb{N} :  D^{(m)} \not= 0\}  \\
	\,  = &  \{m \in \mathbb{N} : \mbox{there is } n \in \mathbb{N}, \Omega(n)=m, a_n \not= 0\}.
\end{align*}

We state some simple properties in the following lemma.
\begin{lemma} \label{lem-Ome}
	For every $D_m \in \mathfrak{D}_m$ and $D_n \in \mathfrak{D}_n$, we have:
	
	$(i)$ $D_m \cdot D_n \in \mathfrak{D}_{m+n}$;
	
	$(ii)$ $\min(\widetilde{\Omega}(D_m + D_n)) = \min\{m, n\}$ if $m \not= n$;
	
	$(iii)$ $\max(\widetilde{\Omega}(D_m + D_n)) = \max\{m, n\}$ if $m \not= n$.
\end{lemma}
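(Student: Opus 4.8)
The plan is to derive all three items from the single arithmetic fact that $\Omega$ is \emph{completely additive}, i.e.\ $\Omega(ab)=\Omega(a)+\Omega(b)$ for all $a,b\in\N$; no analysis is involved, only bookkeeping with coefficients. Throughout, write $D_m=\sum_{\ell=1}^\infty a_\ell\,\ell^{-s}$ and $D_n=\sum_{\ell=1}^\infty b_\ell\,\ell^{-s}$, where by hypothesis $a_\ell=0$ unless $\Omega(\ell)=m$ and $b_\ell=0$ unless $\Omega(\ell)=n$.

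For $(i)$ I would use that the $\ell$-th coefficient of the Dirichlet product $D_mD_n$ equals $\sum_{d\mid\ell}a_d\,b_{\ell/d}$. Any nonzero summand forces $\Omega(d)=m$ and $\Omega(\ell/d)=n$, whence $\Omega(\ell)=\Omega(d)+\Omega(\ell/d)=m+n$. Thus every nonzero coefficient of $D_mD_n$ is indexed by some $\ell$ with $\Omega(\ell)=m+n$, which is exactly the assertion $D_mD_n\in\mathfrak{D}_{m+n}$.

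For $(ii)$ and $(iii)$ I would read off the homogeneous parts of $D_m+D_n=\sum_\ell(a_\ell+b_\ell)\,\ell^{-s}$. Since $m\ne n$, for each fixed $\ell$ at most one of $a_\ell,b_\ell$ is nonzero; more precisely $(D_m+D_n)^{(m)}=D_m$, $(D_m+D_n)^{(n)}=D_n$, and $(D_m+D_n)^{(k)}=0$ for every $k\notin\{m,n\}$. Hence $\widetilde{\Omega}(D_m+D_n)=\{m,n\}$ (using, as is implicit whenever these quantities are meaningful, that $D_m$ and $D_n$ are nonzero), and the claims follow by taking the minimum and the maximum of the two-element set $\{m,n\}$.

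I do not anticipate any genuine obstacle here: the content of the lemma is precisely that $\Omega$ respects products and that $m\ne n$ rules out cancellation between the two homogeneous layers of $D_m+D_n$; the rest is elementary verification.
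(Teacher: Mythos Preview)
Your argument is correct: complete additivity of $\Omega$ handles $(i)$, and the disjointness of homogeneous layers when $m\ne n$ gives $(ii)$ and $(iii)$. The paper does not actually supply a proof of this lemma---it is introduced as ``simple properties'' and left to the reader---so your write-up is exactly the routine verification the authors omitted, and your parenthetical remark that $D_m,D_n$ must be nonzero for $\min$ and $\max$ to make sense is the only caveat worth noting.
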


\begin{lemma} \label{lem-alg01}
	The set $ \mathscr{D}_{\Theta}(j, k, \ell, m) $ is dense in $ \mathscr{A}_{\Theta}(\mathbb{C}_0)$.
\end{lemma}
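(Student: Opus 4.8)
The plan is to prove density by perturbing an arbitrary $D_0\in\mathscr{A}_\Theta(\mathbb{C}_0)$ by a small, high-degree, badly convergent homogeneous Dirichlet series. First I would approximate $D_0$ within $\varepsilon/2$ by a \emph{Dirichlet polynomial} $D_0'\in\mathscr{A}_\Theta(\mathbb{C}_0)$: start from a polynomial approximation of $D_0$ and apply to it the contraction $f\mapsto f\circ\iota$, where $\iota(z)=(\chi_\Theta(i)z_i)_i$, a norm-one map of $B_{c_0}$; on the Dirichlet side this just zeroes the coefficients $a_n$ for which some $\mathfrak p_i\mid n$ with $i\notin\Theta$, hence it fixes $D_0$. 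Then $R:=\max\widetilde{\Omega}(D_0')<\infty$ (set $R=0$ if $D_0'=0$) and $D_0'$ involves only finitely many prime indices, say those in a finite set $F\subset\Theta$. Choose $M>\max(R,m)$ and an infinite arithmetic progression $\Theta'\subset\Theta$ with $\Theta'\cap F=\emptyset$. By Lemma~\ref{Lemma-A} applied to $\Theta'$ and the degree $M$ there is $D_1\in\mathscr{H}_\infty^M\cap\mathscr{A}_{\Theta'}(\mathbb{C}_0)$ of finite norm with $\lim_N A_N(D_1,\delta_m)=\infty$; moreover, inspecting the proofs of Lemmas~\ref{Lemma-A} and~\ref{lem-spac}, I may additionally assume $D_1=\sum_\kappa E_\kappa$, where the $M$-homogeneous series $E_\kappa$ are supported on integers all of whose prime divisors are indexed in pairwise disjoint \emph{finite} blocks $B^{(\kappa)}\subset\Theta'$. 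Finally set $D:=D_0'+\varepsilon' D_1\in\mathscr{A}_\Theta(\mathbb{C}_0)$ with $\varepsilon'>0$ small enough that $\|D-D_0\|_\infty<\varepsilon$.

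The next step is a degree count. Expanding $D^i=\sum_{b=0}^i\binom ib(\varepsilon')^b(D_0')^{i-b}D_1^b$ and using that $D_1^b$ is exactly $bM$-homogeneous while $(D_0')^{i-b}$ has homogeneous parts of degree at most $(i-b)R$, the product $(D_0')^{i-b}D_1^b$ has homogeneous parts of degree at most $bM+(i-b)R$. Since $M>R$, this is strictly below $kM$ whenever $b<k$, and for $i<k$ the whole of $D^i$ has degree at most $iM<kM$. Hence the only degree-$kM$ contribution comes from $i=b=k$, so $(D^k)^{(kM)}=(\varepsilon')^kD_1^k$ and therefore, for \emph{every} $\lambda\in\mathbb{C}^k$,
\[
(D_\lambda)^{(kM)}=\lambda_k(\varepsilon')^kD_1^k .
\]
By \eqref{AN-homogeneo}, any $\lambda$ with $\|\lambda\|_\infty\le j$ and $|\lambda_k|\ge j^{-1}$ satisfies
\[
A_N(D_\lambda,\delta_m)\ \ge\ A_N\big((D_\lambda)^{(kM)},\delta_m\big)\ =\ |\lambda_k|\,(\varepsilon')^k\,A_N(D_1^k,\delta_m)\ \ge\ \frac{(\varepsilon')^k}{j}\,A_N(D_1^k,\delta_m),
\]
so it suffices to prove $\lim_N A_N(D_1^k,\delta_m)=\infty$: this forces, for each admissible $\lambda$, some $N$ with $A_N(D_\lambda,\delta_m)>\ell$, i.e.\ $D\in\mathscr{D}_\Theta(j,k,\ell,m)$.

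This last claim is the main obstacle, since forming $D_1^k$ could create cancellation among the coefficients; the block structure is precisely what rules it out. Write $D_1=\sum a_n n^{-s}$ and $D_1^k=\sum b_n n^{-s}$. If $n=n_1\cdots n_k$ has its $k$ block-components lying in $k$ \emph{distinct} blocks $\kappa_1<\dots<\kappa_k$, with $n_i$ the block-$\kappa_i$ component, then the only factorizations of $n$ into $k$ factors from the support of $D_1$ are the permutations of $(n_1,\dots,n_k)$, so $|b_n|=k!\,|a_{n_1}|\cdots|a_{n_k}|$ with no cancellation. Keeping in $A_N(D_1^k,\delta_m)$ only such $n$ with all $n_i\le N^{1/k}$ and using $n^{-\delta_m}=\prod_i n_i^{-\delta_m}$ gives
\[
A_N(D_1^k,\delta_m)\ \ge\ k!\!\!\sum_{\kappa_1<\dots<\kappa_k}\ \prod_{i=1}^k S_{\kappa_i}\!\big(N^{1/k}\big)\ =\ k!\;e_k\!\Big(\big(S_\kappa(N^{1/k})\big)_\kappa\Big),
\]
where $S_\kappa(x):=\sum|a_n|\,n^{-\delta_m}$ is the sum over $n\le x$ all of whose prime divisors are indexed in $B^{(\kappa)}$, and $e_k$ denotes the $k$-th elementary symmetric function. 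Now $\sum_\kappa S_\kappa(x)=A_x(D_1,\delta_m)\to\infty$, so the (finite) limits $s_\kappa:=\lim_x S_\kappa(x)$ satisfy $\sum_\kappa s_\kappa=\infty$; partitioning the block indices into $k$ sets each carrying infinite $s$-mass shows $e_k\big((s_\kappa)_\kappa\big)=\infty$, and by monotone convergence $e_k\big((S_\kappa(N^{1/k}))_\kappa\big)\to\infty$ as $N\to\infty$. Hence $\lim_N A_N(D_1^k,\delta_m)=\infty$, completing the proof; every remaining step is routine bookkeeping. The delicate point is exactly the no-cancellation estimate for $D_1^k$, which is why the perturbation must be built with multiplicatively disjoint blocks rather than taken abstractly from Lemma~\ref{Lemma-A}.
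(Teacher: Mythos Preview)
Your proof is correct, but it follows a genuinely different route from the paper's.

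The paper perturbs the polynomial $D_1$ by $(2^{w})^{-s}\cdot\frac{\varepsilon}{2}\bigl(1+\tfrac{D_2}{k}\bigr)$, where $D_2$ is an $(m{+}r{+}1)$-homogeneous series with $A_N(D_2,\delta_m)\to\infty$ and $w$ is a carefully chosen integer. The point of the affine factor $1+\tfrac{D_2}{k}$ is that $(1+\tfrac{D_2}{k})^k=1+D_2+D_3$ with $D_3$ of strictly higher homogeneity; together with the shift by $(2^{wk})^{-s}$, this isolates a term \emph{linear in $D_2$} at a unique homogeneous degree of $D^k$, so the estimate reduces immediately to $A_N(D_2,\delta_m)\to\infty$. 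No information about the internal structure of $D_2$ is needed beyond the conclusion of Lemma~\ref{Lemma-A}.

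You instead use the plainer perturbation $D=D_0'+\varepsilon' D_1$ with $D_1$ purely $M$-homogeneous, which forces you to control $A_N(D_1^{\,k},\delta_m)$. You handle the potential cancellation in $D_1^{\,k}$ by going back into the construction of Lemma~\ref{lem-spac} and exploiting the pairwise disjoint finite blocks $B^{(\kappa)}$: products of factors from $k$ distinct blocks admit no nontrivial refactorizations, giving $|b_n|=k!\prod|a_{n_i}|$, and then an elementary symmetric-function lower bound yields $A_N(D_1^{\,k},\delta_m)\to\infty$. Both the degree count and the block argument are sound (the condition $\Theta'\cap F=\emptyset$ is harmless but not actually needed for either step).

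In short: the paper's trick trades a clever choice of perturbation for a one-line endgame, while your argument keeps the perturbation simple at the cost of unpacking the block structure of Lemma~\ref{lem-spac}. Each approach proves the lemma; the paper's is more self-contained relative to Lemma~\ref{Lemma-A}, whereas yours shows that the multiplicative disjointness built into Lemma~\ref{lem-spac} is itself strong enough to survive taking $k$-th powers.
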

\begin{proof}
	Let $j,k,\ell$ and $m \geq 2$ be fixed natural numbers. Let $D_1 : = \sum_{i = 0}^r \sum_ {\Omega(n) = i} a_n n^{-s} $ be a Dirichlet polynomial such that $a_{n}=0$ if $\mathfrak{p}_{i}\vert n$ for some $i\not\in\Theta$ and let $ \varepsilon> 0 $.
	
	Let us take, using  Lemma \ref{Lemma-A} and normalizing, some $D_2 \in \mathscr{H}_\infty^{m + r +1}\cap\mathscr{A}_{\Theta}(\mathbb{C}_{0}) $ such that
	$$ \| D_2 \|_{\infty} \leq \frac{1}{2} \ \ \mbox {and} \ \ \lim_{N \to \infty} A_N(D_2, \delta_m) = \infty.$$
	By Newton's Binomial Formula,
	\begin{eqnarray} \label{Eq-power}
		\left(1+\dfrac{D_2}{k}\right)^k = 1 + D_2 + D_3 \ \ \mbox{with} \ \ D_3: = \displaystyle \sum_{\ell = 2}^{k} \binom{k}{\ell} k^{-\ell} D_2^{\ell}.
	\end{eqnarray}
	Note that $\min(\widetilde {\Omega}(D_3))> m + r$ if $k \geq 2$ and $D_3 = 0$ otherwise. Since for $D_3 = 0$ the proof follows easier, we assume $k \geq 2$ from now on. We set
	\begin{align} \label{def-of-w}
		w := \max_{0 \leq i \leq k-1}\{(k-2)(m+r), rk + mi\} + 1.
	\end{align}
	Now, let $D : \mathbb{C}_0 \to \mathbb{C}$ be the Dirichlet polynomial given by
	\begin{eqnarray} \label{def-of-f}
		D(s) = D_1(s) + (2^w)^{-s} D_4(s) \ \ \mbox{with} \  \ D_4(s) := \dfrac{\varepsilon}{2} \left(1 + \dfrac{D_2(s)}{k}\right).
	\end{eqnarray} It is easy to see that $\|D - D_1\|_\infty <\varepsilon$. To complete the proof it remains to show that $D \in \mathscr{D}_{\Theta}(j,k,\ell,m)$. Note that for $q < k$ we have
	\begin{align*}
		\max(\widetilde{\Omega}(D^q)) &= \max(\widetilde{\Omega}([D_1 + (2^w)^{-s} D_4]^q)) \\
		&= \max(\widetilde{\Omega}(2^{-wqs}D_2^{q})) = wq + q(m+r) \tag{\mbox{from Lemma \ref{lem-Ome}}} \\
		&\leq w(k-1) + (k-1)(m+r) < wk + m + r \tag{\mbox{from (\ref{def-of-w})}},
	\end{align*}
	which implies (by Lemma \ref{lem-Ome})
	\begin{eqnarray} \label{conclusion-dens-1}
		\max(\widetilde{\Omega}(\lambda_{1}D+\cdots+\lambda_{k-1}D^{k-1}))< wk + m + r
	\end{eqnarray}
	for every $ \lambda = (\lambda_{1}, \ldots, \lambda_{k})$ in $\mathbb{C}^{k} $. On the other hand,
	\begin{align}\label{Eq-deg-2}
		D^k&=\left(D_1+(2^w)^{-s}D_4\right)^k
		\nonumber = \sum_{i=0}^k	{k \choose i}D_1^{k-i}\left((2^w)^{-s}D_4\right)^i
		\nonumber
		\\ &= \nonumber \sum_{i=0}^{k-1}{k \choose i}D_1^{k-i}\left((2^w)^{-s}D_4\right)^{i}+\left(\frac{\varepsilon}{2}\right)^k(2^{wk})^{-s}\left(1+\dfrac{D_2}{k}\right)^{k} \tag{\mbox{from (\ref{def-of-f})}}
		\\ &= \sum_{i=0}^{k-1}	{k\choose i}D_1^{k-i}\left((2^w)^{-s}D_4\right)^{i}+\left(\frac{\varepsilon}{2}\right)^k(2^{wk})^{-s}(1+D_2+D_3)
		\nonumber \tag{\mbox{from (\ref{Eq-power})}}\\ &=
		\underbrace{\sum_{i=0}^{k-1}{k \choose i}D_1^{k-i}\left((2^w)^{-s}D_4\right)^{i}}_{\max(\widetilde{\Omega}) < wk \ (by \ (\ref{def-of-w}))} + \underbrace{\left(\frac{\varepsilon}{2}\right)^k(2^{wk})^{-s}}_{lies \ in \ \mathfrak{D}_{wk}} +\underbrace{\left(\frac{\varepsilon}{2}\right)^k(2^{wk})^{-s}D_2}_{lies \ in \  \mathfrak{D}_{wk+m+r}} \nonumber
		\\ \label{conclusion-dens-2} & \, \qquad \, \qquad + \underbrace{\left(\frac{\varepsilon}{2}\right)^k(2^{wk})^{-s}D_3}_{\min(\widetilde{\Omega}) > wk + m + r}.
	\end{align}
	Finally, taking $\lambda = (\lambda_1, \ldots, \lambda_k)$ in $\mathbb{C}^k$ satisfying
	$$\|\lambda\|_\infty\leq j \ \ \mbox{and} \ \ \left|\lambda_{k}\right| \geq j^{-1},$$
	we obtain by using (\ref{conclusion-dens-1}) and (\ref{conclusion-dens-2}) that
	\begin{align*}
	A_N(D_{\lambda}, \delta_m) &\geq \left(\frac{\varepsilon}{2}\right)^k \ 2^{-wk\delta_m} \left\vert\lambda_{k}\right\vert A_N(D_2, \delta_m) \\ &\geq \left(\frac{\varepsilon}{2}\right)^k \ 2^{-wk\delta_m} j^{-1} A_N(D_2, \delta_m) \xrightarrow{N \to \infty} \infty.
	\end{align*}
	This shows that $D\in \mathscr{D}_{\Theta}(j,k,\ell,m)$, and the proof is complete.
\end{proof}

\bibliographystyle{amsplain}

\end{document}